 \newtheorem{thm}{Theorem}[section]
 \newtheorem{cor}[thm]{Corollary} 
 \newtheorem{lem}[thm]{Lemma} 
 \newtheorem{prop}[thm]{Proposition} 
  \theoremstyle{definition}
 \newtheorem{defn}[thm] {Definition} 
 \theoremstyle{remark}
 \newtheorem{rem}[thm]{Remark} 
 \newtheorem{ejem}[thm]{Example} 
  \newtheorem{ejems}[thm]{Examples}
\numberwithin{equation}{subsection}
\newcommand{\OO}{{\mathcal O}}
\newcommand{\LL}{{\mathcal L}}
\newcommand{\M}{{\mathcal M}}
\newcommand{\I}{{\mathcal I}}
\newcommand{\Pc}{{\mathcal P}}
\newcommand{\QQ}{{\mathcal Q}}
\newcommand{\U}{{\mathcal U}}
\newcommand{\V}{{\mathcal V}}
\newcommand{\Nc}{{\mathcal N}}
\newcommand{\ZZ}{{\mathbb Z}}
\newcommand{\RR}{\mathbb R}
\newcommand{\os}{\overset}
\newcommand{\Spec}{\operatorname{Spec}}
\newcommand{\id}{\operatorname{Id}}
\newcommand{\Qcoh}{\operatorname{Qcoh}}
\newcommand{\Mod}{\operatorname{Mod}}
\newcommand{\Qc}{\operatorname{Qc}}
\newcommand{\enumera}{\begin{enumerate}}
\newcommand{\eenumera}{\end{enumerate}}
\newcommand{\C}{{\mathcal C}}
\newcommand{\N}{{\mathcal N}}
\DeclareMathOperator{\Hom}{{Hom}}
\DeclareMathOperator{\di}{{d}}
\newcommand{\alineas}[1]{\begin{array}{#1}}
\newcommand{\alinea}{\begin{array}{l}}
\newcommand{\ealinea}{\end{array}}
\newcommand{\ealineas}{\end{array}}
\newcommand{\Ce}{\check{\mathcal{C}}}
\newsavebox\CBox
\begin{document}

\title{Derived category of Finite Spaces and Grothendieck Duality}

\author{  F. Sancho de Salas}\author{ J.F. Torres Sancho}

\address{ \newline Fernando Sancho de Salas\newline Departamento de
Matem\'aticas and Instituto Universitario de F\'isica Fundamental y Matem\'aticas (IUFFyM)\newline
Universidad de Salamanca\newline  Plaza de la Merced 1-4\\
37008 Salamanca\newline  Spain}
\email{fsancho@usal.es}

\address{ \newline Juan Francisco Torres Sancho\newline Departamento de
Matem\'aticas\newline
Universidad de Salamanca\newline  Plaza de la Merced 1-4\\
37008 Salamanca\newline  Spain}
\email{juanfran24@usal.es}

\subjclass[2010]{18E30, 06A11, 14F05}

\keywords{Finite spaces, quasi-coherent modules, Grothendieck duality, ringed spaces}

\thanks {The  authors were supported by research project MTM2017-86042-P (MEC)}




\begin{abstract} We obtain  some fundamental results, as Bokstedt-Neeman Theorem and Grothendieck duality, about the derived category of modules on a finite ringed space. Then we see how these results are transfered to  schemes in a simple way and generalized to other ringed spaces.  

\end{abstract}

\maketitle

\section*{Introduction}

Finite ringed spaces are a especially simple example of ringed space and they appear in a natural way as finite models of more general ringed spaces. In the topological context, the use of finite models for the study of general topological spaces goes back to   Mc Cord (\cite{McCord}) and it is still a useful tool nowadays  (see for example \cite{Barmak}, \cite{BarmakMinian} and \cite{BarmakMinian2}). In a more algebraic context, as the theory of schemes,   finite ringed spaces   (or more generally quivers with a representation) have been used for the study of the category of quasi-coherent modules on a scheme (for example, in  \cite{EnochsEstrada},\cite{Sancho}). Schemes admitting a finite model are precisely  quasi-compact and quasi-separated schemes. The finite models of these schemes  (resp. of quasi-compact and  semi-separated schemes) are an example of schematic finite spaces (resp. of semi-separated finite spaces), introduced in \cite{Sancho}, but there are schematic finite spaces that are not a finite model of a scheme. Our point of view is that a lot of concepts and results on schemes can be generalized to schematic finite spaces, recovering the results on schemes in  a more essential and - generally - simpler way, and allowing a further generalization to other ringed spaces. For example, the concept of  affine scheme and its different characterizations (as Serre's  cohomological criterion of affineness) led in \cite{Sancho} and \cite{Sanchos} to an analysis of the concept of affineness in the context of finite ringed spaces, schematic finite spaces and then to arbitrary ringed spaces.

 In this paper we continue this point of view: we obtain some fundamental results concerning the derived category of modules on a finite ringed space, recovering then the analogous results about the derived category of modules on a quasi-compact and quasi-separated scheme and then obtaining new results in other ringed spaces. More specifically: 


%

Let $(X,\OO)$ be a semi-separated finite space, $\Qcoh(X)$ the category of quasi-coherent $\OO$-modules and $D\Qcoh(X)$ its derived category. Let us denote by $D(X)$ the derived category of complexes of $\OO$-modules and by $D_{qc}(X)$ the full subcategory  of complexes of $\OO$-modules with quasi-coherent cohomology. Then we prove:

\medskip
\noindent{\bf\ \ Theorem \ref{DQc=D_qc}} (Bokstedt-Neeman theorem for semi-separated finite spaces). The natural functor $D\Qcoh(X)\to D_{qc}(X)$ is an equivalence.
\medskip

\noindent{\bf\ \ Theorem  \ref{Rqc=R}}. Let $f \colon X \to Y$ be a schematic morphism between semi-separated finite spaces.  The diagram 
\[ \xymatrix{  D\Qcoh(X)  \ar[r]^{\RR_{\text{qc}} f_*}  \ar[d] &  D\Qcoh(Y)   \ar[d] \\
D(X) \ar[r]^{\RR f_*} &   D(Y) }\]	is conmutative, where $\RR_{\text{qc}} f_*$ is the right derived functor of $f_*\colon \Qcoh(X)\to\Qcoh(Y)$. 
\medskip

\noindent{\bf\ \ Theorem \ref{enoughflats}}. The category $\Qcoh(X)$ has enough flats: any quasi-coherent module $\M$ admits a resolution 
\[ \cdots\to\Pc^{-2}\to\Pc^{-1}\to\Pc^0\to\M\to 0\] by quasi-coherent and flat $\OO$-modules.

\medskip

Regarding Grothendieck duality we shall first prove the following theorem for any morphism between  finite ringed spaces:

\medskip
\noindent{\bf\ \ Theorem \ref{generalduality}}. Let $f\colon X\to Y$ be a morphism between finite ringed spaces. The functor
$ \RR f_*\colon D(X)\to D(Y)$ has a right adjoint.
\medskip

This theorem generalizes the duality theorem of \cite{Navarro} from finite topological spaces  to arbitrary finite ringed spaces. In the quasi-coherent context we shall obtain:

\medskip
\noindent{\bf\ \ Theorem \ref{SchematicDuality}}. Let $f\colon X\to Y$ be a morphism between finite spaces (Definition \ref{finitespace}). One has:
\begin{enumerate}
\item The functor $\RR f_*\colon D\Qcoh (X)\to D(Y)$, composition of $D\Qcoh(X)\to D(X)$ and $  D(X)\overset{\RR f_*}\to D(Y)$,  has a right adjoint.

\item If $f$ is schematic, the functor $\RR f_* \colon D\Qcoh(X)\to D_{qc}(Y)$ has a right adjoint.

\item If $X$  and $f$ are schematic, the functor $\RR f_*\colon D_{qc}(X)\to D_{qc}(Y)$ has a right adjoint.

\item If $f$ is  schematic and $X, Y$ are  semi-separated,  the functor $\RR_{qc} f_*\colon D\Qcoh (X)\to D\Qcoh(Y)$ has a right adjoint.
\end{enumerate}
\medskip

In order to obtain all these results for schemes  and morphisms of schemes (in Theorem \ref{schemes}), we shall prove:

\medskip
\noindent{\bf\ \ Theorem  \ref{comparison2}}.   Let $S$ be a quasi-compact and quasi-separated  scheme and $\pi\colon S\to X$   a finite model. For any $\M\in D_{qc}(S)$, $\RR\pi_*\M$ belongs to $D_{qc}(X)$ and the functors 
\[ \RR\pi_*\colon D_{qc} (S)\to D_{qc}(X),\qquad \pi^*\colon D_{qc}(X)\to D_{qc} (S)\] are mutually inverse.
\medskip


Finally, let us see how to obtain new results for general ringed spaces. The notion of  affine ringed space was introduced in \cite{Sancho} (see also \cite{Sanchos}). This notion includes homotopically trivial topological spaces on the one side and affine schemes on the other side. Here we introduce the notion of  quasi-compact and quasi-separated ringed space (Definition \ref{qcqs}). In the topological case we obtain topological spaces that admit a finite model, as finite simplical complexes and $h$-regular finite CW-complexes. In the context of schemes, we recover the notion of a quasi-compact and quasi-separated scheme. Using the results of \cite{EnochsEstrada} we shall obtain (Theorem \ref{Thm-qcqs}) that the category $\Qcoh(S)$ of quasi-coherent modules on a quasi-compact and quasi-separated ringed space is a Grothendieck abelian category and   admits flat covers and cotorsion envelopes. Regarding Grothendieck duality,  we introduce the notion of a semi-separable ringed space, which  essentially means those ringed spaces that admit a semi-separated finite model and then we   give a Grothendieck duality theorem for quasi-coherent modules for a morphism (with certain ``schematic'' conditions) between semi-separable ringed spaces; the precise statement is:  

\medskip
\noindent{\bf \ \ Theorem \ref{dualityringedspaces}}. Let $f\colon T\to S$ be a morphism of ringed spaces. Assume that there exist an open covering $\U=\{U_1, \dots, U_n\}$   of $T$ and an open covering $\V=\{V_1,\dots ,V_m\}$ of $S$ satisfying: (for each $t\in T,s\in S$, we shall denote $U^t:=\underset{t\in U_i}\cap U_i$,   $V^s:=\underset{s\in V_j}\cap V_j$) 

(1) $U^t$ (resp. $V^s$) is affine, for any $t\in T$ (resp. $s\in S$).

(2) For any $U^t\supseteq U^{t'}$ (resp. $V^s\supseteq V^{s'}$), the morphisms 
\[ \OO_T(U^t)\to\OO_T(U^{t'}) \qquad (\text{resp.} \OO_S(V^s)\to\OO_S(V^{s'})) \] are flat.

(3) $H^i(U^t\cap U^{t'},\OO_T)=0$ for any $t,t'\in T$ and any $i>0$ (and analogolously for $S,\V$).

(4) $\OO_T(U^t\cap U^q )\otimes_{\OO_T(U^t)}\OO_T(U^{t'})\to \OO_T(U^{t'}\cap U^q )$ is an isomorphism for any $t,q\in T$ and any $U^t\supseteq U^{t'}$ (and analogously for $S,\V$).

(5) $U^t\subseteq f^{-1}(V^{f(t)})$ for any $t\in T$.

(6) For any $U^t\supseteq U^{t'}$, any $V^s\supseteq V^{s'}$ and any $i\geq 0$ the natural morphisms

\[ \aligned H^i(U^t\cap f^{-1}(V^s),\OO_T)\otimes_{\OO_T(U^t)}\OO_T(U^{t'}) &\to H^i(U^{t'}\cap f^{-1}(V^s),\OO_T) \\ 
H^i(U^t\cap f^{-1}(V^s),\OO_T)\otimes_{\OO_S(V^s)}\OO_S(V^{s'}) &\to H^i(U^{t}\cap f^{-1}(V^{s'}),\OO_T)
\endaligned
\] are isomorphisms. 

Then: the functor $\RR_{qc} f^{qc}_*\colon D\Qcoh(T)\to D\Qcoh (S)$ has a right adjoint, where $f^{qc}_*\colon \Qcoh(T)\to\Qcoh(S)$ is the right adjoint of $f^*\colon\Qcoh(S)\to\Qcoh(T)$, and $\RR_{qc} f^{qc}_*$ is its right derived functor.
\medskip

 This theorem may be understood as the essentialization of which properties of schemes are involved  in order to have a Grothendieck duality theorem for quasi-coherent modules. 

\section{Basics}

Let $X$ be a finite topological space. It is well known, since Alexandroff, that the topology of $X$ is equivalent to a preorder relation on $X$: $p\leq q$ iff $\bar p\subseteq \bar q$, where $\bar p,\bar q$ are the closures of $p$ and $q$. For each point $p\in X$ we shall denote   $$U_p=\text{ smallest open subset containing }p.$$ In other words $U_p=\{ q\in X: q\geq p\}$. Thus, $p\leq q\Leftrightarrow U_p\supseteq U_q$. A map $f\colon X\to Y$ between finite topological spaces is continuous if and only if it is monotone, i.e., $p\leq q$ implies $f(p)\leq f(q)$.

\begin{defn} A {\it finite ringed space} is a ringed space $(X,\OO)$ whose underlying topological space $X$ is finite. The sheaf of rings $\OO$ is always assumed to be a sheaf of commutative  rings with unity.
\end{defn}

A sheaf of rings $\OO$ on a finite topological space $X$ is equivalent to the following data: a ring $\OO_p$ for each $p\in X$ and a morphism of rings $r_{pq}\colon \OO_p\to\OO_q$ for each $p\leq q$, such that $r_{pp}=\id_{\OO_p}$ for any $p\in X$ and $r_{ql}\circ r_{pq}=r_{pl}$ for any $p\leq q\leq l$. One has that
\[ \OO_p=\text{\rm stalk of } \OO\,\text{\rm   at } p =\OO(U_p) \] and $r_{pq}$ is the restriction morphism $\OO (U_p)\to \OO(U_q)$.

A sheaf $\M$ of $\OO$-modules (or an $\OO$-module)  is equivalent to the following data: an $\OO_p$-module  $\M_p$ for each $p\in X$ and a morphism of $\OO_p$-modules $r_{pq}\colon \M_p\to\M_q$ for each $p\leq q$, such that $r_{pp}=\id_{\OO_p}$ for any $p\in X$ and $r_{ql}\circ r_{pq}=r_{pl}$ for any $p\leq q\leq l$. Again, one has that \[ \M_p=\text{\rm stalk at } p \,\text{\rm   of } \M =\M(U_p) \] and $r_{pq}$ is the restriction morphism $\M (U_p)\to \M(U_q)$. A morphism of $\OO$-modules $f\colon \M\to \N$ is equivalent to giving, for each $p\in X$, a morphism of $\OO_p$-modules  $f_p\colon\M_p\to\N_p$, which are compatible with the restriction morphisms $r_{pq}$.

If $\M$ is an $\OO$-module,  for each $p\leq q$ the morphism $r_{pq}$ induces a  morphism of $\OO_q$-modules $\widetilde r_{pq}\colon \M_p\otimes_{\OO_p}\OO_q\to\M_q$. It is proved in \cite{Sancho2} that $\M$ is a quasi-coherent $\OO$-module if and only if $\widetilde r_{pq}$ is an isomorphism for any $p\leq q$.

We shall denote by $\Mod(X)$ the category of $\OO$-modules on a ringed space $(X,\OO)$ and by $\Qcoh(X)$ the subcategory of quasi-coherent modules. For any ring $A$, $\Mod(A)$ denotes the category of $A$-modules.

\begin{ejem} The topological space with one element shall be denoted by $\{ *\}$. Thus, $(*,A)$ denotes the finite ringed space whose underlying topological space is $\{*\}$ and the sheaf of rings is a ring $\OO_*=A$. For any ringed space $(X,\OO)$ there is a natural morphism of ringed spaces $(X,\OO)\to (*,A)$, with $A=\OO(X)$.
\end{ejem}

\begin{defn}\label{finitespace} A {\it finite space} is a finite ringed space $(X,\OO)$ whose restriction morphisms $r_{pq}\colon \OO_p\to\OO_q$ are flat.
\end{defn}

The main properties of the category $\Qcoh(X)$  over a finite space are:

(1) $\Qcoh(X)$ is an abelian subcategory of $\Mod(X)$.

(2) $\Qcoh(X)$ is a Grothendieck category (see \cite{EnochsEstrada}).
\medskip

For any $\OO_p$-module $M$, we shall denote by $\widetilde M$ the quasi-coherent module on $U_p$ defined by $\widetilde M_x=M\otimes_{\OO_p}\OO_x$. In other words, $\widetilde M=\pi^*M$, where $\pi\colon (U_p,\OO_{\vert U_p})\to (*,\OO_p)$ is the natural morphism of ringed spaces. The functors
\[ \aligned \Qcoh(U_p) &\to \Mod(\OO_p)\\ \M &\leadsto \M_p\endaligned, \qquad \aligned \Mod(\OO_p) &\to \Qcoh(U_p)\\ M &\leadsto \widetilde M \endaligned\]  are mutually inverse. 

\begin{defn} A {\it schematic} finite space is a finite  space $(X,\OO)$ such that $R^i\delta_*\OO$ is quasi-coherent for any $i\geq 0$, where $\delta\colon X\to X\times X$ is the diagonal morphism. If in addition $R^i\delta_*\OO=0$ for any $i>0$, then we say that $(X,\OO)$ is a {\it semi-separated} finite space. 
\end{defn}

%
%

\begin{defn} A morphism $f\colon X\to Y$ is said to be {\it schematic} if  $R^i\Gamma_*\OO_X$ is quasi-coherent for any $i\geq 0$, where $\Gamma\colon X\to X\times Y$ is the graphic of $f$. For any schematic morphism $f\colon X\to Y$, the spaces $X$ and $Y$ are always assumed to be finite spaces (Definition \ref{finitespace}). 
\end{defn}

%

The following basic properties of schematic spaces, semi-separated spaces and schematic morphisms may be found in \cite{Sancho}.

\begin{prop}\label{basics}\begin{enumerate}

\item A morphism $f\colon X\to Y$ is   schematic if and only if: for any $x\leq x'\in X$,   any $y\leq y'\in Y$, and any $i\geq 0$, the natural morphisms

\[\aligned H^i(U_x\cap f^{-1}(U_y),\OO_X)\otimes_{\OO_{X,x}}\OO_{X,x'} &\to H^i(U_{x'}\cap f^{-1}(U_y),\OO_X) \\
H^i(U_x\cap f^{-1}(U_y),\OO_X)\otimes_{\OO_{Y,y}}\OO_{Y,y'} &\to H^i(U_{x}\cap f^{-1}(U_{y'}),\OO_X)
\endaligned \] are isomorphisms. 

\item A finite space $(X,\OO)$ is semi-separated if and only if:

(a) $H^i(U_p\cap U_q,\OO)=0$ for any $p,q\in X$ and any $i>0$.

(b) $\OO(U_p\cap U_q)\otimes_{\OO_p}\OO_{p'}\to \OO (U_{p'}\cap U_q)$ is an isomorphism for any $p,q\in X$ and any $p'\geq p$.

\item A finite space $(X,\OO)$ is schematic if and only if $U_p$ is semi-separated for any $p\in X$.

\item If $f\colon X\to Y$ is an schematic morphism, then $R^if_*\M$ is quasi-coherent for any $i\geq 0$ and any quasi-coherent module $\M$ on $X$. Moreover, for any $x\in X$, the induced morphism $\bar f\colon U_x\to U_{f(x)}$ satisfies: $R^i\bar f_*\M =0$, for any $i>0$ and any quasi-coherent module $\M$ on $U_x$, so
\[ \bar f_*\colon \Qcoh(U_x)\to \Qcoh(U_{f(x)})\] is an exact functor.

\item If $X$ is schematic, then the inclusion $j\colon U\hookrightarrow X$ is schematic, for any open subset $U$.

\item If $X$ is semi-separated, then for any $p\in X$, the inclusion $j\colon U_p\hookrightarrow X$ satisfies:
\begin{enumerate}
\item $j_*\colon \Qcoh(U_p)\to\Qcoh(X)$ is an exact functor and $H^i(X,j_*\M)=0$ for any $i>0$ and any quasi-coherent module $\M$ on $U_p$.

\item For any $x\in X$ the morphism $\OO_p\to \OO(U_p\cap U_x)$ is flat and for any quasi-coherent module $\M$ on $U_p$ the natural morphism
\[ \M_p\otimes_{\OO_p}\OO(U_p\cap U_x)\to \M(U_p\cap U_x)\] is an isomorphism.
\end{enumerate}
\end{enumerate}
\end{prop}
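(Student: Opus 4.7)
The common thread in all six parts is that, because stalks on a finite ringed space are indexed by a poset, every sheaf-theoretic property translates into a combinatorial condition on stalks together with the base-change maps $\widetilde r_{pq}\colon \M_p\otimes_{\OO_p}\OO_q\to\M_q$. My plan is to run one common stalk computation for pushforwards along the graph $\Gamma\colon X\to X\times Y$ of a morphism $f\colon X\to Y$, read off (1) and (2) from it simultaneously, and then peel off (3)--(6) by localization on suitable open sets.

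For the core computation, in $X\times Y$ the smallest open containing $(x,y)$ is $U_x\times U_y$, the stalk of the structure sheaf there is $\OO_{X,x}\otimes\OO_{Y,y}$, and $\Gamma^{-1}(U_x\times U_y)=U_x\cap f^{-1}(U_y)$; hence
\[ (R^i\Gamma_*\OO_X)_{(x,y)}=H^i(U_x\cap f^{-1}(U_y),\OO_X). \]
Quasi-coherence means the base-change map is an isomorphism along every inequality $(x,y)\leq(x',y')$, and factoring such an inequality as $(x,y)\to (x',y)\to(x',y')$ splits the condition into the two maps in (1). Taking $Y=X$ and $f=\id$, so that $\Gamma=\delta$, part (a) of (2) is the vanishing $R^i\delta_*\OO=0$ for $i>0$ and part (b) is quasi-coherence of $\delta_*\OO$. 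For (3), the forward direction is immediate: specializing (1) with $f=\id$ to points in $U_p$ recovers exactly the semi-separation conditions on $U_p$. The converse is the main obstacle I anticipate, since for incomparable $p,q$ the open $U_p\cap U_q$ is not of the form $U_r$; I would cover it by the $U_r$ with $r\geq p,q$ and run a \Cech/Mayer--Vietoris descent, feeding the semi-separation of each $U_r$ into the base-change and vanishing statements that are to be checked on $X$.

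For (4), I would reduce a general quasi-coherent $\M$ to the case $\M=\widetilde M$ on some $U_x$ and compute $(R^if_*\widetilde M)_y=H^i(f^{-1}(U_y)\cap U_x,\widetilde M)$; the schematic hypothesis, in the stalk-wise form of (1), identifies this with a flat base change of $H^i(U_x,\widetilde M)$, which vanishes for $i>0$ because $U_x$ has a smallest point. For (5), the graph of $j\colon U\hookrightarrow X$ is a restriction of the diagonal of $X$ to $U\times X$, so the schematic hypothesis on $X$ transfers directly. For (6), with $X$ semi-separated I would compute $(j_*\M)_x=\M(U_p\cap U_x)$ using (2)(a) to get $H^i(U_p\cap U_x,\OO)=0$ for $i>0$ and (2)(b) to give both the flatness of $\OO_p\to\OO(U_p\cap U_x)$ and the tensor description of $\M(U_p\cap U_x)$; the global vanishing $H^i(X,j_*\M)=0$ then follows from the Leray spectral sequence for $j$ together with these local vanishings.
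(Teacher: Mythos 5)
The paper does not prove Proposition \ref{basics} at all: it states these facts and refers to \cite{Sancho}, so there is no in-paper argument to measure you against. On its own terms, your basic mechanism is correct and is surely the intended one: the stalk of $R^i\Gamma_*\OO_X$ at $(x,y)$ is $H^i(U_x\cap f^{-1}(U_y),\OO_X)$, the minimal open of $(x,y)$ is $U_x\times U_y$, and quasi-coherence over $\OO_{X,x}\otimes\OO_{Y,y}$ reduces, by factoring $(x,y)\leq(x',y')$ through $(x',y)$, to the two elementary base changes. This cleanly yields (1), yields (2) upon taking $f=\id$ (vanishing of $R^i\delta_*\OO$ for $i>0$ makes its quasi-coherence automatic, so only $\delta_*\OO$ contributes condition (b)), and yields (5), since $R^i\Gamma_{j*}\OO_U$ is literally the restriction of $R^i\delta_*\OO$ to $U\times X$. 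The forward direction of (3) and most of (6) are also fine, modulo the extra observation $H^i(U_b,\OO)=0$ for $i>0$ (because $U_b$ has a minimum).

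The genuine gaps are in the converse of (3) and in the first assertion of (4), which is where the content of this proposition actually sits. For (3), your proposed \v{C}ech descent over the cover of $U_p\cap U_q$ by the $U_r$ with $r\geq p,q$ needs, as input, that \emph{multiple} intersections $U_{r_0}\cap\cdots\cap U_{r_l}$ of minimal opens of a semi-separated space are again acyclic and satisfy the base-change formula; the two-fold case is the definition, but the $l$-fold case is an inductive lemma you have not supplied, and moreover the covers of $U_p\cap U_q$ and of $U_{p'}\cap U_q$ have different index sets, so the comparison of the two \v{C}ech complexes is not term-by-term. For (4), the claim ``$R^if_*\M$ is quasi-coherent for every quasi-coherent $\M$ on $X$'' does not reduce to $\M=\widetilde M$ on a single $U_x$: your formula $H^i(U_x\cap f^{-1}(U_y),\widetilde M)$ computes the stalks of $R^i\bar f_*$ for the restricted morphism $\bar f\colon U_x\to U_{f(x)}$, i.e.\ only the second assertion of (4), whereas $(R^if_*\M)_y=H^i(f^{-1}(U_y),\M)$ involves the whole open set $f^{-1}(U_y)=\bigcup_{x\in f^{-1}(U_y)}U_x$ and again requires gluing the local statements over that cover (note also that (4) does not assume $X$ itself schematic, so the pseudo-\v{C}ech resolution is not available). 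Finally, a small misattribution in (6): the flatness of $\OO_p\to\OO(U_p\cap U_x)$ does not follow from (2)(b); it follows from (2)(a) together with flatness of the restriction maps, by splitting the exact standard complex of $\OO$ on $U_p\cap U_x$ into short exact sequences of flat $\OO_p$-modules.
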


To conclude this section, we shall use without further mention that any Grothendieck abelian category has enough $K$-injectives (\cite{AlonsoJeremiasSouto}).

\section{Standard resolution, pseudo-cech resolution and quasicoherentation}


Let $(X,\OO)$ be a  finite ringed space of dimension $n$ and let $\M$ be an $\OO$-module. 

\begin{defn} We say that an ordered chain $x_0 < \cdots < x_i$ of points of $X$ belongs to an open subset $U$ (denoted by $(x_0 < \cdots < x_i) \in U$)   if all the $x_i$ belong to $U$;  since $U$ is open, it suffices that $x_0 \in U$.
\end{defn}
			 
\begin{defn} 
The {\it standard complex of $\M$} is the complex of $\OO$-modules 
$$ \C^{\scriptscriptstyle\bullet} \M :=0 \to \C^0 \M \to \C^1 \M \to \cdots \to \C^n \M \to 0 \qquad (n=\dim X)$$
defined as follows: for each  open set $U$ of $X$,
$$(\C^i \M) (U)= \prod_{(x_0 < \cdots < x_i) \in U} \M_{x_i} $$
and the restriction morphisms $(\C^i \M)(U) \to (\C^{i} \M)(V)$ are the natural projections (the set of chains belonging to $U$ is the disjoint union of the set of chains belonging to $V$ and the set of chains belonging to $U$ but not belonging to $V$).

The differential $d \colon \C^i \M \to \C^{i+1} \M $ is defined as follows: for each element $s=(s_{x_0 < \cdots < x_i}) \in (\C^i \M)(U)$, the element $d s  \in (\C^{i+1} \M)(U)$ is given by the formula:
$$(d s)_{x_0 < \cdots < x_{i+1}} = \sum_{k=0}^i (-1)^k s_{x_0 < \cdots < \widehat{x_k} < \cdots < x_{i+1}} + (-1)^{i+1} \bar{s}_{x_0 < \cdots < x_i} \in \M_{x_{i+1}},$$
where the notation $\widehat{x_k}$  means that we omit the element $x_k$ and $\bar{s}_{x_0 < \cdots < x_i}$ is the image of the element $s_{x_0 < \cdots < x_i} \in \M_{x_i}$ by the restriction morphism $\M_{x_i} \to \M_{x_{i+1}}$.

One easily checks that $ d \circ d=0$. There is also a natural morphism $\M \to \C^0 \M$, which is injective.
\end{defn}

\begin{rem} A morphism of modules $\M\to\M'$ induces a morphism of modules $\C^i\M\to\C^i\M'$ and then a morphism of complexes $\C^{\scriptscriptstyle\bullet}\M\to \C^{\scriptscriptstyle\bullet}\M'$. It is clear that $\M\leadsto\C^i\M$ is an exact functor.
\end{rem}

\begin{rem} \label{alternative def. of standard} 

Let us denote $\beta^i X =\{(x_0, \dots, x_i) \in X \times \os{i}{\cdots} \times X: x_0 < \cdots < x_i \}$ with the discrete topology and let $\OO_{\beta^i X}$ be the   sheaf of rings on $\beta^iX$ defined by $$(\OO_{\beta^i X})_{x_0 < \cdots < x_i} = \OO_{x_i}.$$   We have two natural morphisms of ringed spaces $\pi_0, \pi_i \colon (\beta^i X, \OO_{\beta^i X}) \to (X, \OO)$, defined as $\pi_0(x_0 < \cdots < x_i)= x_0$ and $\pi_i(x_0 < \cdots < x_i)= x_i$ (and the obvious morphisms between the sheaves of rings). Then 
$$ \C^i \M= \pi_{0*}(\pi_i^*\M).$$
\end{rem}

\begin{defn} For each open subset $U$ of $X$, we shall denote $$\M_{U}:= j_*\M_{|U},$$ where $j\colon U \hookrightarrow X$ is the natural inclusion.
\end{defn}

\begin{defn} The {\it pseudo-Cech complex of $\M$} is the complex of $\OO$-modules $$ \Ce^{\scriptscriptstyle\bullet} \M :=0 \to \Ce^0 \M \  {\to} \ \Ce^1 \M \ {\to} \cdots \to \Ce^n \M \to 0\qquad (n=\dim X)$$
defined by
$$\Ce^i \M = \prod_{x_0 < \cdots < x_i} \M_{U_{x_i}} $$
and the differential $\check{d} \colon \Ce^i \M \to \Ce^{i+1} \M $ is defined as follows: for each element $s=(s_{x_0 < \cdots < x_i}) \in (\Ce^i \M)(U)$, the element $\check{d} s  \in (\Ce^{i+1}\M)(U)$ is given by the formula:
$$(\check{d} s)_{x_0 < \cdots < x_{i+1}} = \sum_{k=0}^i (-1)^k s_{x_0 < \cdots < \widehat{x_k} < \cdots < x_{i+1}} + (-1)^{i+1} \bar{s}_{x_0 < \cdots < x_i} \in \M_{U_{x_{i+1}}}(U),$$
where the notation $\widehat{x_k}$  means  we omit the element $x_k$ and $\bar{s}_{x_0 < \cdots < x_i}$ is the image of the element $s_{x_0 < \cdots < x_i} \in \M_{U_{x_i}}(U)$ by the natural morphism $\M_{U_{x_i}}(U) \longrightarrow \M_{U_{x_{i+1}}}(U)$.

One easily checks that $ \check{d} \circ \check{d} =0$.  There is a natural morphism $\M \to \check{\C}^0 \M$ which is inyective. A morphism of modules $\M\to\M'$ induces a morphism of modules $\Ce^i\M\to\Ce^i\M'$ and then a morphism of complexes $\Ce^{\scriptscriptstyle\bullet}\M\to \Ce^{\scriptscriptstyle\bullet}\M'$.
\end{defn}

%
%

\subsection{Quasicoherentation}

Let $(X, \OO)$ be a finite space 
The inclusion $\Qcoh(X)\hookrightarrow \Mod(X)$ commutes with direct limits. Since $\Qcoh(X)$ is a Grothendieck category, it has a right adjoint
$$ \Qc \colon   \operatorname{Mod}(X) \to \Qcoh (X).$$

%
%
%
%
%
%

\begin{rem} One easily checks that $\Qc$ is   additive and left exact. Its restriction  to cuasicoherent $\OO$-modules is the identity.  
\end{rem}

The right derived functor $\RR\Qc\colon D(X)\to D\Qcoh(X)$ is a right adjoint of the natural functor $D\Qcoh(X)\to D(X)$.

\begin{prop}  \label{prop properties} Let $(X,\OO)$ be a finite ringed space and let   $\M$, $\N$ be $\OO$-modules.  

\begin{enumerate}
\item  For any $i\geq 0$, $$\Hom_{\OO}(\N , \Ce^i \M)= \Gamma(X, \C^i\underline{\Hom}_{\OO}(\N, \M)).$$    
\item  For any $i\geq 0$,   
$$ \Hom_{\OO}(\N, \C^i\M)= \prod_{x_0 < \cdots < x_i} \Hom_{\OO_{x_i}} ( \N_{x_0} \otimes_{\OO_{x_0}} \OO_{x_i},\M_{x_i}).$$

\item[(2')] If $\N$ is  quasicoherent, then
$$ \Hom_{\OO}(\N, \C^i\M)= \prod_{x_0 < \cdots < x_i} \Hom_{\OO_{x_i}} ( \N_{x_i},\M_{x_i})$$

\item If $(X, \OO)$ is  schematic, then
$$ \Qc(\C^i \M)= \prod_{x_0 < \cdots < x_i} j_* \widetilde{\M_{x_i}} $$
where  $ j \colon U_{x_i} \hookrightarrow X$ is the natural inclusion. 

\item[(3')] If $X$ is schematic and $\M$ is quasi-coherent, then 
\[ \Qc(\C^i \M)= \Ce^i \M.\]  
\item If $(X,\OO)$ is semi-separated, the functor $$\Qc \circ \C^i\colon \Mod(X)\to\Qcoh(X)$$ is   exact.
  
\item If $X$ is a finite space and $\M$ is   injective, then $\C^i\M$ is also injective.
\item If $X$ is semi-separated and $\M$ is   flat, then $\Qc(\C^i\M)$ is also flat.

\end{enumerate}

\end{prop}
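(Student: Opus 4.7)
The plan is to prove the seven items by direct computation, exploiting the description $\C^i\M = \pi_{0*}\pi_i^*\M$ from the Remark together with standard adjunctions and the structural results of Proposition \ref{basics}.

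For (1), I would write $\Ce^i\M = \prod_{x_0<\cdots<x_i} j_{x_i*}(\M_{|U_{x_i}})$, pull the product out of $\Hom$, and use the $(j^*,j_*)$-adjunction to obtain $\prod \Hom_{\OO_{|U_{x_i}}}(\N_{|U_{x_i}},\M_{|U_{x_i}}) = \prod \underline{\Hom}_\OO(\N,\M)_{x_i}$, which is exactly $\Gamma(X,\C^i\underline{\Hom}_\OO(\N,\M))$. For (2), using the Remark, $\Hom_\OO(\N,\pi_{0*}\pi_i^*\M) = \Hom_{\OO_{\beta^iX}}(\pi_0^*\N,\pi_i^*\M)$; since $\beta^iX$ is discrete the $\Hom$ splits as a product over chains, and computing stalks gives $(\pi_0^*\N)_{(x_0<\cdots<x_i)} = \N_{x_0}\otimes_{\OO_{x_0}}\OO_{x_i}$ and $(\pi_i^*\M)_{(x_0<\cdots<x_i)} = \M_{x_i}$. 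Item (2') is the specialization to quasi-coherent $\N$, where the defining isomorphism $\widetilde r_{x_0,x_i}$ collapses the tensor factor to $\N_{x_i}$.

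For (3) I would use Yoneda on $\Qcoh(X)$. On one hand, for $\N$ quasi-coherent, $\Hom(\N,\Qc(\C^i\M)) = \Hom(\N,\C^i\M) = \prod \Hom_{\OO_{x_i}}(\N_{x_i},\M_{x_i})$ by adjointness of $\Qc$ and by (2'). On the other hand, Proposition \ref{basics}(5) implies $j\colon U_{x_i}\hookrightarrow X$ is schematic in a schematic $X$, so each $j_*\widetilde{\M_{x_i}}$ is quasi-coherent; the finite product $\prod j_*\widetilde{\M_{x_i}}$ is therefore quasi-coherent, and $\Hom(\N, j_*\widetilde{\M_{x_i}}) = \Hom_{\OO_{x_i}}(\N_{x_i},\M_{x_i})$ via the equivalence $\Qcoh(U_{x_i})\simeq \Mod(\OO_{x_i})$. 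Both sides represent the same functor, so they are isomorphic. Item (3') is immediate: for quasi-coherent $\M$ one has $\widetilde{\M_{x_i}}=\M_{|U_{x_i}}$, so $j_*\widetilde{\M_{x_i}} = \M_{U_{x_i}}$ and the product is $\Ce^i\M$.

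For (4), formula (3) presents $\Qc\circ\C^i$ as a composition of exact functors: the stalk $\M\mapsto\M_{x_i}$ is always exact, the tildization $M\mapsto\widetilde M$ is an equivalence, $j_*\colon\Qcoh(U_{x_i})\to\Qcoh(X)$ is exact by Proposition \ref{basics}(6)(a), and a finite product is exact. For (5), I first show that $\M$ injective implies each $\M_p$ injective as $\OO_p$-module: given an inclusion $M\hookrightarrow N$ of $\OO_p$-modules, apply the exact functor $j_!$ (for $j\colon U_p\hookrightarrow X$) to $\widetilde M\hookrightarrow\widetilde N$ to obtain an inclusion in $\Mod(X)$, then use injectivity of $\M$ together with $\Hom_\OO(j_!\widetilde M,\M) = \Hom_{\OO_p}(M,\M_p)$. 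It follows that $\pi_i^*\M$ has injective stalks, so it is injective on the discrete space $\beta^iX$; finally $\pi_{0*}$ preserves injectives because $\pi_0^*$, whose stalks are $\N_{x_0}\otimes_{\OO_{x_0}}\OO_{x_i}$, is exact on a finite space (all restriction maps are flat). Thus $\C^i\M = \pi_{0*}\pi_i^*\M$ is injective. For (6), flatness is checked stalkwise; using (3) and Proposition \ref{basics}(6)(b), $(\prod j_*\widetilde{\M_{x_i}})_p = \bigoplus_{x_0<\cdots<x_i} \M_{x_i}\otimes_{\OO_{x_i}}\OO(U_p\cap U_{x_i})$, and each summand is flat over $\OO_p$ by composing two flat base changes ($\OO_{x_i}$-flatness of $\M_{x_i}$ and $\OO_p$-flatness of $\OO(U_p\cap U_{x_i})$). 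I expect the main obstacle to be the injectivity transfer $\M\leadsto\M_p$ in (5), since it is the only step that must genuinely reach outside the quasi-coherent machinery and invoke the $j_!$ adjunction.
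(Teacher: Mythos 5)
Your proposal is correct and follows essentially the same route as the paper: (1)--(2') by the adjunctions for $j_*$ and for $\pi_{0*}\pi_i^*$ on the discrete space $\beta^iX$, (3) by representability/Yoneda on $\Qcoh(X)$ using the quasi-coherence of $j_*\widetilde{\M_{x_i}}$, (4) as a composite of exact functors, (5) by reducing to injectivity of the stalks $\M_{x_i}$ (the paper concludes via formula (2) and flatness of $\OO_{x_0}\to\OO_{x_i}$, which is the same content as your $\pi_{0*}$-preserves-injectives argument), and (6) by the stalk computation $(j_*\widetilde{\M_{x_i}})_p=\M_{x_i}\otimes_{\OO_{x_i}}\OO(U_{x_i}\cap U_p)$ from Proposition \ref{basics}(6)(b). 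The only cosmetic differences are that you make the $j_!$ adjunction explicit in (5) and fold the paper's small flatness lemma into a direct computation in (6).
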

\begin{proof}  (1) follows from the definitions and the isomorphism
\[ \Hom_\OO(\N,\M_{U_x})=\Hom_{\OO_{\vert U_x}}(\N_{\vert U_x}, \M_{\vert U_x})= \underline{\Hom}_{\OO}(\N, \M)_x\] for any $x\in X$. 

(2) follows from the equality $\C^i \M = \pi_{0*}(\pi_i^*\M) $ (Remark \ref{alternative def. of standard}), taking into account that $\beta^iX$ is discrete. If $\N$ is quasi-coherent, then  $\N_{x_0} \otimes_{\OO_{x_0}} \OO_{x_i}=\N_{x_i}$ and we obtain (2'). If $X$ is schematic and $\N$ is quasi-coherent, then $j_*\widetilde{\M_{x_i}}$ is quasi-coherent and $\Hom_{\OO_{x_i}} ( \N_{x_i},\M_{x_i})= \Hom_{\OO} ( \N,j_*\widetilde{\M_{x_i}})$. Thus (3) follows from (2'). Moreover, if $\M$ is quasi-coherent, then $j_*\widetilde{\M_{x_i}}=\M_{U_{x_i}}$ and we obtain (3').  If $X$ is semi-separated, then the functor $\M\leadsto j_*\widetilde \M_{x_i}$ is exact. Thus (4) follows from (3). 

(5) Let $\I$ be an injective $\OO$-module. In order to prove that $\C^i\I$ is injective, it suffices to see, by (2), that $\I_x$ is an injective $\OO_x$-module for any $x\in X$ (notice that the morphisms $\OO_{x_0}\to\OO_{x_i}$ are flat by hypothesis). For any $\OO_x$-module $N$, one has
\[ \Hom_{\OO_x}(N,\I_x)=\Hom_{\OO_{\vert U_x}}(\widetilde N, \I_{\vert U_x})\] and one concludes because $\I_{\vert U_x}$ is an injective $\OO_{\vert U_x}$-module.

(6) If $\M$ is flat, then $\M_x$ is a flat $\OO_x$-module for any $x\in X$ and then $\widetilde{\M_x}$ is a flat $\OO_{\vert U_x}$-module. We conclude by    (3)  and the following

\begin{lem} Let $X$ be a semi-separated finite space, $j\colon U_x\hookrightarrow X$ the natural inclusion and $\Pc$ a quasi-coherent and flat module on $U_x$. Then $j_*\Pc$ is  flat.
\end{lem}

\begin{proof} For any $p\in X$, one has that $(j_*\Pc)_p=\Pc(U_x\cap U_p)$. Since $X$ is semi-separated, one has an isomorphism $\Pc_x\otimes_{\OO_x}\OO (U_x\cap U_p)\overset\sim \to \Pc(U_x\cap U_p)$ which is a flat $\OO_p$-module because $\Pc_x$ is a flat $\OO_x$-module and $\OO_p\to \OO(U_x\cap U_p)$ is flat.
\end{proof}
\end{proof}

\begin{thm}\label{resolutions} Let $(X,\OO)$ be a finite ringed space,  $\M$ an $\OO$-module.

\begin{enumerate}

\item $\C^{\scriptscriptstyle\bullet} \M$ is a finite and  flasque resolution of $\M$. 

\item If $(X,\OO)$ is  semi-separated  and $\M$ is   quasi-coherent, then   $\Ce^{\scriptscriptstyle\bullet}\M$ is a resolution of $\M$ by acyclic quasi-coherent $\OO$-modules.

\end{enumerate}
\end{thm}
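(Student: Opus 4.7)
For part (1), the bound $\C^i\M=0$ for $i>\dim X$ gives the finite length; flasqueness is immediate because each restriction $\C^i\M(U)\to\C^i\M(V)$ is a surjective projection of products. For exactness of $\M\to\C^{\scriptscriptstyle\bullet}\M$, I check at stalks: $(\C^i\M)_p=\prod_{p\leq x_0<\cdots<x_i}\M_{x_i}$. I construct a contracting homotopy by prepending the cone point $p$ to each chain, setting $s(t)_{x_0<\cdots<x_{i-1}}:=t_{p<x_0<\cdots<x_{i-1}}$ when $p<x_0$ and $0$ when $p=x_0$, together with the augmentation $\varepsilon(t)=t_p$; a routine sign check yields $\varepsilon+ds+sd=\id$.

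For part (2), quasi-coherence of $\Ce^i\M$ is immediate: $X$ being finite, the defining product is \emph{finite}, and each $\M_{U_{x_i}}=j_*(\M_{|U_{x_i}})$ is quasi-coherent by Proposition \ref{basics}(6a), which likewise yields the acyclicity $H^{>0}(X,\Ce^i\M)=0$.

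The delicate point is the exactness of $\M\to\Ce^{\scriptscriptstyle\bullet}\M$, which I check at stalks. Combining Proposition \ref{basics}(6b) applied to $\M_{|U_p}$ with the fact that finite products commute with tensor products produces a canonical identification
\[ (\Ce^i\M)_p\cong\M_p\otimes_{\OO_p}(\Ce^i\OO)_p, \]
so the stalk complex arises by tensoring the universal augmented complex $\OO_p\to(\Ce^{\scriptscriptstyle\bullet}\OO)_p$ with $\M_p$. Each $(\Ce^i\OO)_p$ is $\OO_p$-flat because $\OO_p\to\OO(U_p\cap U_x)$ is flat (Proposition \ref{basics}(6b)) and finite products preserve flatness. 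To prove exactness of $\OO_p\to(\Ce^{\scriptscriptstyle\bullet}\OO)_p$, I identify it with the alternating \v{C}ech complex of $\OO$ on $U_p$ for the covering $\{U_p\cap U_x\}_{x\in X}$: by Proposition \ref{basics}(2)(a) the finite intersections $U_p\cap U_{x_i}$ are $\OO$-acyclic, so this \v{C}ech complex computes $H^{\scriptscriptstyle\bullet}(U_p,\OO)$, which is $\OO_p$ in degree $0$ and zero above. Finally, tensoring a bounded flat resolution with $\M_p$ preserves exactness of the augmented complex by the usual d\'evissage (break into short exact sequences and propagate flatness of the syzygies downward from the top), giving the desired exactness at $p$. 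The main obstacle is precisely this stalkwise \v{C}ech identification, which is the essential use of the semi-separation hypothesis.
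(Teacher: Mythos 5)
Part (1) is fine: the cone homotopy obtained by prepending $p$ to chains in $U_p$ is exactly the standard argument (the paper simply cites \cite[Theorem 2.15]{Sancho} here), and flasqueness and finiteness are as you say. In part (2), quasi-coherence and acyclicity of each $\Ce^i\M$ are also fine via Proposition \ref{basics}(6a). Your route to the exactness of $\M\to\Ce^{\scriptscriptstyle\bullet}\M$ is genuinely different from the paper's: you reduce to $\M=\OO$ by the flat base change $(\Ce^i\M)_p\cong\M_p\otimes_{\OO_p}(\Ce^i\OO)_p$ of Proposition \ref{basics}(6b) and then tensor a bounded exact complex of flats, whereas the paper identifies $(\Ce^{\scriptscriptstyle\bullet}\M)_x$ directly with $\Gamma(X,\C^{\scriptscriptstyle\bullet}(\M_{U_x}))$ via Proposition \ref{prop properties}(1) (applied to $\N=\OO^{U_x}$, the sheaf $\OO$ supported on $U_x$) and concludes from $H^i(X,\M_{U_x})=0$, which is Proposition \ref{basics}(6a).

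There is, however, a genuine flaw at your key step. The complex $(\Ce^{\scriptscriptstyle\bullet}\OO)_p$ has $i$-th term $\prod_{x_0<\cdots<x_i}\OO(U_p\cap U_{x_i})$, indexed by \emph{strictly increasing chains in the poset} $X$; it is not the alternating \v{C}ech complex of the covering $\{U_p\cap U_x\}_{x\in X}$, whose terms are indexed by arbitrary tuples of indices. For two incomparable points $x,y$ the \v{C}ech complex contains a factor $\OO(U_p\cap U_x\cap U_y)$ that has no counterpart in $(\Ce^{\scriptscriptstyle\bullet}\OO)_p$, and $U_p\cap U_x\cap U_y$ need not equal any $U_p\cap U_z$. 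So the identification is false as stated, and Proposition \ref{basics}(2)(a) does not by itself yield the exactness you need. The correct statement is that $(\Ce^{\scriptscriptstyle\bullet}\OO)_p=\Gamma(X,\C^{\scriptscriptstyle\bullet}(\OO_{U_p}))$ (this is Proposition \ref{prop properties}(1) with $\N=\OO^{U_p}$), whose cohomology is $H^{\scriptscriptstyle\bullet}(X,\OO_{U_p})$ by part (1) of the theorem, and this vanishes in positive degrees (with $H^0=\OO_p$) by Proposition \ref{basics}(6a) since $X$ is semi-separated. Once you invoke that, your argument closes — but note that the same identification works verbatim with $\M_{U_p}$ in place of $\OO_{U_p}$, which is the paper's proof and makes the flat base change and the d\'evissage through syzygies unnecessary.
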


\begin{proof} 

(1) See \cite[Theorem 2.15]{Sancho}.

(2) $\Ce^i\M$ is acylic, since $\M_{U_x}$ is acyclic for any $x\in X$ because $X$ is semi-separated. Let us see that $\Ce^{\scriptscriptstyle\bullet}\M$ is a resolution of $\M$. For any open subset $U$  of $X$, let us denote $\OO^{U}$ the sheaf $\OO$ supported on $U$. For any $\OO$-module $\LL$, one has:
\[ \Hom_{\OO} (\OO^U, \LL)= \Gamma(U, \LL),\qquad 
 \underline{\Hom}_{\OO} (\OO^U,\LL)=  \LL_U. \]
Then $$ (\Ce^{\scriptscriptstyle\bullet} \M)_x =  \Hom_{\OO} (\OO^{U_x}, \Ce^{\scriptscriptstyle\bullet} \M)= \Gamma(X, \C^{\scriptscriptstyle\bullet}(\underline{\text{Hom}}_{\OO}(\OO^{U_x}, \M)))= \Gamma(X, \C^{\scriptscriptstyle\bullet}(\M_{U_x})),$$ where the second equality is due to Proposition \ref{prop properties}, (4). Thus, $H^i [(\Ce^{\scriptscriptstyle\bullet} \M)_x] = H^i(X, \M_{U_x}).$ Since $X$ is semi-separated, $H^i(X, \M_{U_x})=0$ for $i>0$ and we are done.
\end{proof}

\begin{rem} If $\M$ is a complex of $\OO$-modules, then $\C^{\scriptscriptstyle\bullet}\M$ denotes the simple (or total) complex associated to the bicomplex $\C^p\M^q$. Analogously, $\Ce^{\scriptscriptstyle\bullet}\M$ denotes the simple complex associated to the bicomplex $\Ce^p\M^q$. Taking into account the boundedness of the complexes $C^{\scriptscriptstyle\bullet}$ and $\Ce^{\scriptscriptstyle\bullet}$, Theorem \ref{resolutions} yields that $\M\to \C^{\scriptscriptstyle\bullet}\M$ is a quasi-isomorphism and so is $\M\to\Ce^{\scriptscriptstyle\bullet}\M$ if $X$ is semi-separated and $\M$ is a complex of quasi-coherent modules.
\end{rem}

\section{Semi-separated finite spaces}

Let $(X,\OO)$ be a finite space. We shall denote by $D(X)$ the derived category of complexes of $\OO$-modules and by $D\Qcoh(X)$ the derived category of complexes of quasi-coherent $\OO$-modules. We shall denote by $D_{qc}(X)$ the full subcategory of $D(X)$ whose objects are the complexes of $\OO$-modules with quasi-coherent cohomology. The objects of these categories have a very simple description:

A complex $\M$ of $\OO$-modules is the same as giving:   a complex $\M_x$ of $\OO_x$-modules, for each $x\in X$,  and a morphism of complexes of $\OO_x$-modules 
\[ r_{xx'}\colon \M_x\to \M_{x'}\] for each $x\leq x'$, such that $r_{xx}=\id$ for any $x$ and $r_{xx''}=r_{x'x''}\circ r_{xx'}$ for any $x\leq x'\leq x''$. If we denote $$\widetilde r_{xx'}\colon \M_x\otimes_{\OO_x}\OO_{x'}\to\M_{x'}$$ the  morphism of $\OO_{x'}$-modules induced by $r_{xx'}$ then:
\begin{enumerate}
\item $\M$ is a complex of quasi-coherent modules if and only if $\widetilde r_{xx'}$ is an isomophism for any $x\leq x'$.
\item  $\M$ is a complex with quasi-coherent cohomology if and only if $\widetilde r_{xx'}$ is a  quasi-isomorphism   for any $x\leq x'$.
\end{enumerate} 

\begin{thm}\label{Qc-derived} Let $(X,\OO)$ be a  semi-separated finite space. Then $\C^i\M$ is  $\Qc$-acyclic, for any $\OO$-module $\M$ and any $i\geq 0$. Consequently, 
\begin{enumerate} \item $R^i\Qc(\M)=0$ for any $i>\operatorname{dim} X$ and any $\OO$-module $\M$.
 \item For any complex $\M$ of $\OO$-modules  
$$ \RR Qc(\M)\simeq Qc(\C^{\scriptscriptstyle\bullet} \M).$$ In particular, any complex $\M$ of quasi-coherent modules is $\Qc$-acylic and $\M\simeq \RR\Qc(\M)$.
\end{enumerate}
\end{thm}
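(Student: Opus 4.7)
The strategy rests on the two key inputs already established in Proposition \ref{prop properties}: namely (5), which says $\C^i$ carries injective $\OO$-modules to injective $\OO$-modules, and (4), which says $\Qc\circ \C^i\colon\Mod(X)\to\Qcoh(X)$ is exact when $X$ is semi-separated. These two properties together are exactly what is needed to derive $\Qc$-acyclicity of $\C^i\M$.

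First I would prove the central claim: $\C^i\M$ is $\Qc$-acyclic for every $\OO$-module $\M$ and every $i\geq 0$. Since $\Mod(X)$ is a Grothendieck abelian category, $\M$ admits an injective resolution $\M\to\I^{\scriptscriptstyle\bullet}$. The functor $\C^i$ is exact (as remarked after its definition), so $\C^i\M\to\C^i\I^{\scriptscriptstyle\bullet}$ is still a resolution, and by Proposition \ref{prop properties}(5) each $\C^i\I^j$ is injective. Hence $\C^i\I^{\scriptscriptstyle\bullet}$ is an injective resolution of $\C^i\M$, and therefore $\RR\Qc(\C^i\M)=\Qc(\C^i\I^{\scriptscriptstyle\bullet})$. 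But by Proposition \ref{prop properties}(4), $\Qc\circ\C^i$ is exact, so applying it to the exact resolution $0\to\M\to\I^{\scriptscriptstyle\bullet}$ produces the exact complex $0\to\Qc(\C^i\M)\to\Qc(\C^i\I^{\scriptscriptstyle\bullet})$. This says exactly that $R^j\Qc(\C^i\M)=0$ for $j>0$, which is the desired acyclicity.

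Once this is in hand, statement (1) follows immediately: by Theorem \ref{resolutions}(1), $\C^{\scriptscriptstyle\bullet}\M$ is a resolution of $\M$ of length $\dim X$, and we have just shown it consists of $\Qc$-acyclic modules. Hence $\RR\Qc(\M)\simeq \Qc(\C^{\scriptscriptstyle\bullet}\M)$ is concentrated in degrees $0,\dots,\dim X$, giving $R^i\Qc(\M)=0$ for $i>\dim X$. For the complex version in (2), I would observe that because $\Qc$ has finite cohomological dimension (bounded by $\dim X$), a bounded resolution by $\Qc$-acyclic objects computes $\RR\Qc$ even for unbounded complexes: given an arbitrary complex $\M$, the bicomplex $\C^p\M^q$ has bounded rows (of length $\dim X$), its associated total complex $\C^{\scriptscriptstyle\bullet}\M$ is quasi-isomorphic to $\M$ (by the Remark following Theorem \ref{resolutions}), and each column $\C^{\scriptscriptstyle\bullet}\M^q$ is a bounded resolution by $\Qc$-acyclics; a standard filtration/spectral-sequence argument, together with the finite uniform bound on the cohomological dimension of $\Qc$, lets one conclude $\RR\Qc(\M)\simeq \Qc(\C^{\scriptscriptstyle\bullet}\M)$.

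The last assertion of (2) then follows cleanly from the quasi-coherent versions of Proposition \ref{prop properties} and Theorem \ref{resolutions}: if $\M$ is a complex of quasi-coherents, part (3') gives $\Qc(\C^i\M^q)=\Ce^i\M^q$, so $\Qc(\C^{\scriptscriptstyle\bullet}\M)=\Ce^{\scriptscriptstyle\bullet}\M$, and by Theorem \ref{resolutions}(2) (applied to each $\M^q$) this is quasi-isomorphic to $\M$, proving $\M\simeq\RR\Qc(\M)$. The only genuinely delicate point in the whole argument is the unbounded-complex case of (2); everything else is a direct combination of exactness of $\Qc\circ\C^i$, preservation of injectives by $\C^i$, and the fact that $\C^{\scriptscriptstyle\bullet}$ is a resolution of bounded length.
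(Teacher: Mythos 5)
Your proof is correct and follows essentially the same route as the paper: an injective resolution $\M\to\I^{\scriptscriptstyle\bullet}$, exactness of $\C^i$ together with Proposition \ref{prop properties}(5) to get an injective resolution $\C^i\I^{\scriptscriptstyle\bullet}$ of $\C^i\M$, and exactness of $\Qc\circ\C^i$ (Proposition \ref{prop properties}(4)) to conclude acyclicity, with the final assertion via (3') and Theorem \ref{resolutions}. The only difference is that you spell out the deduction of (1) and the unbounded case of (2), which the paper leaves implicit under ``Consequently''; your filtration/finite-cohomological-dimension argument is the standard and correct way to justify that step.
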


\begin{proof} Let $\M\to\I^{\scriptscriptstyle\bullet}$ be an injective resolution. By  Proposition \ref{prop properties}, (4), $\Qc(\C^i\M)\to \Qc(\C^i\I^{\scriptscriptstyle\bullet})$ is still a resolution. We conclude because $\C^i\I^{\scriptscriptstyle\bullet}$ is a resolution of $\C^i\M$ ($\C^i$ is exact) by injectives (Proposition \ref{prop properties}, (5)).

If $\M$ is a complex of quasi-coherent modules, then $\Qc(\C^{\scriptscriptstyle\bullet}\M)\overset{\ref{prop properties}, \text{(3')}}=\Ce^{\scriptscriptstyle\bullet}\M \overset{\ref{resolutions}}\simeq \M$.

\end{proof}

\begin{thm}[Bokstedt-Neeman Theorem for semi-separated finite spaces]\label{DQc=D_qc} Let $(X,\OO)$ be a  semi-separated finite space. The functor $D\Qcoh (X) \to D_{qc}(X)$ is an equivalence.
\end{thm}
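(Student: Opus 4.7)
The plan is to exhibit the restriction of $\RR\Qc$ to $D_{qc}(X)$ as a quasi-inverse of the natural functor $F\colon D\Qcoh(X)\to D_{qc}(X)$. Recall from the preceding paragraph that $\RR\Qc\colon D(X)\to D\Qcoh(X)$ is right adjoint to the natural functor $D\Qcoh(X)\to D(X)$, so it suffices to check that the unit $\N\to\RR\Qc(F(\N))$ is an isomorphism for every $\N\in D\Qcoh(X)$ and that the counit $F(\RR\Qc(\M))\to\M$ is an isomorphism for every $\M\in D_{qc}(X)$. The unit side is immediate from Theorem \ref{Qc-derived}(2), which gives $\RR\Qc(\N)\simeq\N$ for any complex $\N$ of quasi-coherent modules; in particular $F$ is automatically fully faithful.

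For essential surjectivity I would first handle $\M\in D^b_{qc}(X)$ by induction on the cohomological amplitude. The base case, $\M$ a shift of a single quasi-coherent sheaf, is again covered by Theorem \ref{Qc-derived}(2). For the inductive step, with $\M$ having cohomology concentrated in $[a,b]$ and $a<b$, I would use the truncation triangle $\tau^{<b}\M\to\M\to H^b(\M)[-b]\to$; both outer vertices lie in $D_{qc}(X)$ (since $\M$ does), the left one of strictly smaller amplitude, so the counit is an isomorphism on them by the inductive hypothesis and the base case. Since a morphism of distinguished triangles that is an isomorphism at two vertices is an isomorphism at the third, the counit is an isomorphism on $\M$ as well.

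The passage to an arbitrary $\M\in D_{qc}(X)$ would rely on the finite cohomological dimension of $\Qc$: Theorem \ref{Qc-derived}(1) gives $R^i\Qc=0$ for $i>n:=\dim X$, while $\Qc$ is left exact. The standard hypercohomology spectral sequence then yields that if $\N\in D(X)$ has $H^j(\N)=0$ for $j<a$ (resp.\ for $j>b$), then $H^l(\RR\Qc(\N))=0$ for $l<a$ (resp.\ for $l>b+n$). Fix $k\in\ZZ$ and pick $a\leq k-n$, $b\geq k$: the truncation triangles $\tau^{<a}\M\to\M\to\tau^{\geq a}\M\to$ and $\tau^{\leq b}\M\to\M\to\tau^{>b}\M\to$ then force, via the long exact cohomology sequences, that the natural maps $H^k(\RR\Qc(\tau^{[a,b]}\M))\to H^k(\RR\Qc(\M))$ and $H^k(\tau^{[a,b]}\M)\to H^k(\M)$ are isomorphisms. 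These identifications are compatible with the counit, so applying the bounded case to $\tau^{[a,b]}\M\in D^b_{qc}(X)$ finishes the argument.

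The main obstacle is precisely this passage to unbounded complexes. The bounded reduction is a routine d\'evissage by truncation, but in unbounded degrees one genuinely needs the finite cohomological dimension of $\Qc$, which is exactly where the semi-separated hypothesis enters the proof.
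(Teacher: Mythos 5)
Your proposal is correct, and the overall skeleton (exhibit $\RR\Qc$ restricted to $D_{qc}(X)$ as a quasi-inverse by checking unit and counit, with the unit and full faithfulness falling out of Theorem \ref{Qc-derived}(2)) matches the paper. Where you genuinely diverge is in propagating the sheaf-level statement to all of $D_{qc}(X)$: the paper works with the explicit identification $\RR\Qc(\M)\simeq\Qc(\C^{\scriptscriptstyle\bullet}\M)$ and compares the bicomplexes $\Qc(\C^p\M^q)$ and $\C^p\M^q$ column by column, using that $\Qc\circ\C^p$ and $\C^p$ are exact and that $p$ ranges over $0\leq p\leq\dim X$, so the whole reduction to a single quasi-coherent module (where $\Qc(\C^{\scriptscriptstyle\bullet}\M)=\Ce^{\scriptscriptstyle\bullet}\M$ and Theorem \ref{resolutions} finishes) happens in one stroke, with the finitely many columns automatically taking care of unboundedness. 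You instead run the classical way-out d\'evissage by truncation, powered by the finite cohomological dimension $R^i\Qc=0$ for $i>\dim X$ from Theorem \ref{Qc-derived}(1). Your route is more portable (it would work for any right adjoint of finite cohomological dimension that is the identity on quasi-coherent sheaves), but it leans on the assertion that $\N\in D^{\leq b}$ forces $H^l(\RR\Qc(\N))=0$ for $l>b+\dim X$ even when $\N$ is unbounded below; the ``standard hypercohomology spectral sequence'' does not converge for free in that generality, so you should justify this amplitude bound -- the quickest way inside this paper is to note that $\RR\Qc(\N)\simeq\Qc(\C^{\scriptscriptstyle\bullet}(\tau^{\leq b}\N))$ by Theorem \ref{Qc-derived}(2), and that bicomplex vanishes in total degrees $>b+\dim X$. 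With that point made explicit, your argument is complete and reaches the same conclusion by a different, more abstract path.
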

\begin{proof} By Theorem \ref{Qc-derived}, $\RR\Qc(\M)\simeq \Qc(\C^{\scriptscriptstyle\bullet}\M)$. The key point is to prove that the natural morphism
\[ \Qc(\C^{\scriptscriptstyle\bullet}\M)\to \C^{\scriptscriptstyle\bullet}\M\] is a quasi-isomorphism for any complex $\M$ with quasi-coherent cohomology. It suffices to prove that
\[ H^i_{\di_\M}[\Qc(\C^{\scriptscriptstyle\bullet}\M)]\to H^i_{\di_\M}[\C^{\scriptscriptstyle\bullet}\M]\] is a quasi-isomorphism, where $\di_\M\colon \C^p\M^q\to \C^p\M^{q+1}$ is the ``vertical'' differential of the bicomplex $\C^p\M^q$ (and analogously for the bicomplex $\Qc(\C^p\M^q)$). Since $\Qc\circ \C^{\scriptscriptstyle\bullet}$ and $\C^{\scriptscriptstyle\bullet}$ are exact, this amounts to prove that 
\[ \Qc(\C^{\scriptscriptstyle\bullet}H^i(\M))\to \C^{\scriptscriptstyle\bullet}H^i(\M)\] is a quasi-isomorphism; that is, we may assume that  $\M$ is a quasi-coherent module. In this case, $\Qc(C^{\scriptscriptstyle\bullet}\M)=\Ce^{\scriptscriptstyle\bullet}\M$ by Proposition \ref{prop properties}, (3'). We conclude because $\M\to \Ce^{\scriptscriptstyle\bullet}\M$ and $\M\to C^{\scriptscriptstyle\bullet}\M$ are quasi-isomorphisms (Theorem \ref{resolutions}).

%

Now let us conclude the proof of the theorem. If $\M\in D_{qc(X)}$, the quasi-isomorphism $\Qc(\C^{\scriptscriptstyle\bullet}\M)\to \C^{\scriptscriptstyle\bullet}\M$ gives an isomorphism in $D_{qc}(X)$, $\RR\Qc(\M)\simeq \M$, i.e., the composition $D_{qc}(X)\overset{\RR\Qc}\to D\Qcoh(X)\to D_{qc}(X)$ is isomorphic to the identity. If $\M\in D\Qcoh(X)$, the natural morphism
$\M=\Qc(\M)\to \Qc(\C^{\scriptscriptstyle\bullet}\M)$ is a quasi-isomorphism, because its composition with $\Qc(\C^{\scriptscriptstyle\bullet}\M)\to \C^{\scriptscriptstyle\bullet}\M$ is a quasi-isomorphism. Thus, we have obtained an isomorphism  $\M\simeq \RR\Qc(\M)$ in $D\Qcoh(X)$, so the composition $D\Qcoh(X)\to D_{qc}(X)\to D\Qcoh(X)$ is isomorphic to the identity.
\end{proof}

\begin{cor}\label{DqcInD} Let $X$ be an schematic finite space. The functor $D_{qc}(X)\hookrightarrow D(X)$ has a right adjoint.
\end{cor}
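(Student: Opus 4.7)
The plan is to produce the right adjoint as $\RR\Qc$, packaged as landing in $D_{qc}(X)$. Let $\alpha\colon D\Qcoh(X)\to D_{qc}(X)$ denote the natural functor (a complex of quasi-coherent $\OO$-modules has quasi-coherent cohomology) and $\beta\colon D_{qc}(X)\hookrightarrow D(X)$ the full inclusion; then $\beta\circ\alpha$ equals the inclusion $i\colon D\Qcoh(X)\hookrightarrow D(X)$, which already has right adjoint $\RR\Qc$. I would take the candidate right adjoint of $\beta$ to be the composite
\[ R\;:=\;\alpha\circ\RR\Qc\;\colon\;D(X)\longrightarrow D_{qc}(X), \]
with the morphism $R\M\to\M$ inherited from the counit of $i\dashv\RR\Qc$.

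To verify that $R$ is right adjoint to $\beta$, I must show that for every $\K\in D_{qc}(X)$ and $\M\in D(X)$ the natural map $\Hom_{D(X)}(\K,R\M)\to\Hom_{D(X)}(\K,\M)$ is a bijection; equivalently, the cone $C$ of the counit $R\M\to\M$ must be right-orthogonal to $D_{qc}(X)$. My strategy is local-to-global: since $X$ is schematic, Proposition \ref{basics}(3) gives that each $U_p$ is semi-separated, and Theorem \ref{DQc=D_qc} then yields $D\Qcoh(U_p)\simeq D_{qc}(U_p)$, so that $\RR\Qc_{U_p}$ is a right adjoint to $D_{qc}(U_p)\hookrightarrow D(U_p)$. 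The key technical input is a locality statement for the quasi-coherator: for every $p\in X$ there is a natural isomorphism $(\RR\Qc\,\M)_{|U_p}\simeq \RR\Qc_{U_p}(\M_{|U_p})$ in $D(U_p)$, which I plan to deduce from the explicit formula $\Qc(\C^i\M)=\prod_{x_0<\cdots<x_i}j_*\widetilde{\M_{x_i}}$ of Proposition \ref{prop properties}(3) combined with the fact that the standard complex is a flasque resolution (Theorem \ref{resolutions}(1)) whose formation commutes with restriction to open subsets.

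Granting the locality claim, the restriction of $R\M\to\M$ to each $U_p$ coincides with the local counit $\RR\Qc_{U_p}(\M_{|U_p})\to\M_{|U_p}$, so by the local adjunction on the semi-separated space $U_p$ the restricted cone $C_{|U_p}$ is right-orthogonal to $D_{qc}(U_p)$. I would then promote this local vanishing to the global assertion $\Hom_{D(X)}(\K,C)=0$ by the local-to-global spectral sequence $H^p(X,\underline{\operatorname{Ext}}^q(\K,C))\Rightarrow\Hom_{D(X)}(\K,C[p+q])$, noting that the $\underline{\operatorname{Ext}}$-sheaves are local and vanish on every $U_p$ by the previous step. The main obstacle will be establishing the locality $(\RR\Qc\,\M)_{|U_p}\simeq\RR\Qc_{U_p}(\M_{|U_p})$ in the schematic (non-semi-separated) setting: for semi-separated $X$ it follows effortlessly from $\RR\Qc\simeq\Qc\circ\C^{\scriptscriptstyle\bullet}$ (Theorem \ref{Qc-derived}), whereas for merely schematic $X$ one must exploit that the products $\prod j_*\widetilde{(-)}$ of Proposition \ref{prop properties}(3) restrict well to each semi-separated $U_p$, where Theorem \ref{Qc-derived} then does apply to $\M_{|U_p}$.
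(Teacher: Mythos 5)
Your overall strategy --- take $R=\alpha\circ\RR\Qc$ and check that the cone of the counit $R\M\to\M$ is right-orthogonal to $D_{qc}(X)$ --- is a legitimate template for producing a right adjoint to a fully faithful inclusion, but the technical input you rest it on, the locality $(\RR\Qc\,\M)_{|U_p}\simeq\RR\Qc_{U_p}(\M_{|U_p})$ for arbitrary $\M\in D(X)$, is not merely unproven: it is false. On $U_p$ the equivalence $\Qcoh(U_p)\simeq\Mod(\OO_p)$, $\N\leadsto\N_p$, shows that $\Qc_{U_p}(\K)=\widetilde{\K_p}=\pi^*(\K_p)$ for $\pi\colon U_p\to(*,\OO_p)$, hence $\RR\Qc_{U_p}(\K)\simeq\pi^*(\K_p)$ and its stalk at $p$ is quasi-isomorphic to $\K_p$, the local counit being a stalkwise quasi-isomorphism at $p$. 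If your locality held at every $p$ and identified the restricted counit with the local one, the counit $\RR\Qc\,\M\to\M$ would be a stalkwise quasi-isomorphism, i.e.\ an isomorphism in $D(X)$, for \emph{every} $\M$; this is impossible when $\M$ is a single non-quasi-coherent module, since $\RR\Qc\,\M$ is a complex of quasi-coherent modules and therefore has quasi-coherent cohomology. Your proposed route to the claim is also blocked independently: computing $\RR\Qc$ by the standard complex needs the $\Qc$-acyclicity of the $\C^i\M$ (Theorem \ref{Qc-derived}), whose proof uses the exactness of $\Qc\circ\C^i$ (Proposition \ref{prop properties}, (4)) and hence semi-separatedness of $X$ itself --- exactly the hypothesis the corollary is meant to dispense with. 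Finally, if $\alpha\circ\RR\Qc$ were right adjoint to the fully faithful $\beta$, the unit of the adjunction would give $\K\simeq\alpha(\RR\Qc\,\K)$ for every $\K\in D_{qc}(X)$, i.e.\ Bokstedt--Neeman essential surjectivity for all schematic spaces, a statement the paper establishes only in the semi-separated case; so your identification of the adjoint cannot be justified with the tools at hand.

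The paper sidesteps all of this by never naming the adjoint globally: it argues by induction on $\# X$. If $X$ has a minimum it is semi-separated and $\RR\Qc$ works via Theorem \ref{DQc=D_qc}; otherwise $X=U\cup V$ with $U,V$ proper open subsets, the inductively constructed objects $(\Nc_{\vert U})_{qc}$, $(\Nc_{\vert V})_{qc}$, $(\Nc_{\vert U\cap V})_{qc}$ are glued by setting $\Nc_{qc}:=\operatorname{Cone}(\phi-\psi)[-1]$ in the Mayer--Vietoris triangle $\Nc\to\RR i_*\Nc_{\vert U}\oplus\RR j_*\Nc_{\vert V}\to\RR h_*\Nc_{\vert U\cap V}$, and the universal property is verified by applying $\Hom(\M,-)$ to the resulting morphism of exact triangles. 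If you want to retain a local-to-global argument, this cone construction is the correct substitute for the (false) locality of the derived quasi-coherator.
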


\begin{proof} We have to prove:

(*)  for any $\Nc\in D(X)$ there exists an object $\Nc_{qc}\in D_{qc}$ and a morphism $\Nc_{qc}\to\Nc$ such that $\Hom_{D(X)}(\M,\Nc_{qc})\to\Hom_{D(X)}(\M,\Nc)$ is an isomorphism for any $\M\in D_{qc}(X)$. 

We proceed by induction on $\# X=$ number of elements of $X$. If $\# X=1$, then $X$ is semi-separated and then $\RR\Qc\colon D(X)\to D\Qcoh(X)\overset{\ref{DQc=D_qc}}\simeq D_{qc}(X)$ is a right adjoint. Now let $\# X$ be greater than 1. If $X$ has a minimun, $X=U_p$, then $X$ is semi-separated and we conclude as before. If $X$ has not a minimum, then $X=U\cup V$, with $U,V$ open subsets different from $X$. Let us denote $i\colon U\hookrightarrow X$, $j\colon V\hookrightarrow X$ and $h\colon U\cap V\hookrightarrow X$ the inclusions. By induction, there exist $(\Nc_{\vert U})_{qc}\in D_{qc}(U)$,  $(\Nc_{\vert V})_{qc}\in D_{qc}(V)$, $(\Nc_{\vert U\cap V})_{qc}\in D_{qc}(U\cap V)$ and morphisms $\alpha\colon (\Nc_{\vert U})_{qc}\to \Nc_{\vert U}$, $\beta\colon (\Nc_{\vert V})_{qc}\to \Nc_{\vert V}$ and $\gamma\colon (\Nc_{\vert U\cap V})_{qc}\to \Nc_{\vert U\cap V}$ satisfying (*). Then $\alpha_{\vert U\cap V}$ and $\beta_{\vert U\cap V}$ factor through $\gamma$. Hence we obtain morphisms
\[  \RR {i}_* (\Nc_{\vert U})_{qc} \overset\phi\longrightarrow \RR {h}_* (\Nc_{\vert U\cap V})_{qc},\qquad \RR {j}_* (\Nc_{\vert V})_{qc} \overset\psi\longrightarrow  \RR {h}_* (\Nc_{\vert U\cap V})_{qc}\] and commutative diagrams
\[ \xymatrix{ \RR {i}_* (\Nc_{\vert U})_{qc} \ar[r]\ar[d] &  \RR {h}_* (\Nc_{\vert U\cap V})_{qc}\ar[d] \\ \RR {i}_* \Nc_{\vert U} \ar[r] & \RR {h}_* \Nc_{\vert U\cap V} }\qquad \xymatrix{ \RR {j}_* (\Nc_{\vert V})_{qc} \ar[r]\ar[d] &  \RR {h}_* (\Nc_{\vert U\cap V})_{qc}\ar[d] \\ \RR {j}_* \Nc_{\vert V} \ar[r] & \RR {h}_* \Nc_{\vert U\cap V} }
\]
Let us consider the triangle $\Nc\to  \RR {i}_* \Nc_{\vert U}\oplus \RR {j}_* \Nc_{\vert V}\to \RR {h}_* \Nc_{\vert U\cap V}$ and the commutative diagram
\[\xymatrix{ \Nc \ar[r] &  \RR {i}_* \Nc_{\vert U}\oplus \RR {j}_* \Nc_{\vert V}\ar[r] & \RR {h}_* \Nc_{\vert U\cap V}\\ & \RR {i}_* (\Nc_{\vert U})_{qc} \oplus \RR {j}_* (\Nc_{\vert V})_{qc} \ar[r]^{\qquad\phi-\psi}\ar[u] & \RR {h}_* (\Nc_{\vert U\cap V})_{qc}\ar[u]
  }. 
\] Let us define $\Nc_{qc}:=\operatorname{Cone}(\phi-\psi)[-1]$ and let $\Nc_{qc}\to \Nc$ be a morphism completing the above diagram to a morphism of   exact triangles. This morphism satisfies (*). Indeed, for any $\M\in D_{qc}(X)$ one has that
\[ \Hom(\M,\RR i_*\Nc_{\vert U})= \Hom(\M_{\vert U}, \Nc_{\vert U}) =\Hom(\M_{\vert U}, (\Nc_{\vert U})_{qc})= \Hom(\M,\RR i_*(\Nc_{\vert U})_{qc})\] and analogously for $V$ and $U\cap V$. One concludes by applying $\Hom(\M,\quad)$ to the morphism  of exact triangles.
\end{proof}


\begin{rem}\label{remarkBN} There are non-schematic finite spaces where Bokstedt-Neeman theorem holds. For example, for any affine finite space $X$, one has an equivalence $D_{qc}(X)\simeq D\Qcoh(X)$ and they are both equivalent to $D(A)$, with $A=\OO(X)$ (see \cite[Proposition 3.17]{Sancho}). Thus, for any finite space $X$, Bokstedt-Neeman holds locally: $D_{qc}(U_x)\simeq D\Qcoh (U_x)\simeq D(\OO_x)$ for any $x\in X$.
\end{rem}

Let $f \colon X \to Y$ be a schematic morphism between finite spaces. Since $\Qcoh(X)$ is a Grothendieck abelian category, the functor $f_*\colon \Qcoh(X)\to\Qcoh (Y)$ has a right derived functor  
\[ \RR_{\text{qc}}f_*\colon D\Qcoh(X)\to D\Qcoh(Y).\]

%

\begin{thm}\label{Rqc=R} Let $f \colon X \to Y$ be a schematic morphism between semi-separated finite spaces.  The diagram 
\[ \xymatrix{  D\Qcoh(X)  \ar[r]^{\RR_{\text{qc}} f_*}  \ar[d] &  D\Qcoh(Y)   \ar[d] \\
D(X) \ar[r]^{\RR f_*} &   D(Y) }\]	is conmutative.		
\end{thm}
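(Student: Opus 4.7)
The plan is to find a single resolution of any $\M\in D\Qcoh(X)$ that simultaneously computes $\RR f_*\M$ in $D(Y)$ and $\RR_{\text{qc}} f_*\M$ in $D\Qcoh(Y)$. The natural candidate is the (totalized) pseudo-Cech bicomplex $\Ce^{\scriptscriptstyle\bullet}\M$ from Section~2: the remark following Theorem~\ref{resolutions} says $\M\to\Ce^{\scriptscriptstyle\bullet}\M$ is a quasi-isomorphism in $\Qcoh(X)$ (hence in $\Mod(X)$), and the pseudo-Cech direction is bounded by $\dim X+1$, independently of $\M$.

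For a quasi-coherent module $\N$ on $X$, $\Ce^i\N=\prod_{x_0<\cdots<x_i}\N_{U_{x_i}}$ is a finite product of modules of the form $\N_{U_x}=(j_x)_*(\N_{\vert U_x})$, each quasi-coherent by Proposition~\ref{basics}(6a) since $X$ is semi-separated. The core of the argument consists of two acyclicity claims for such modules. First, $R^pf_*(\N_{U_x})=0$ for $p>0$: factoring $f\circ j_x=j_{f(x)}\circ\bar f$ with $\bar f\colon U_x\to U_{f(x)}$, Proposition~\ref{basics}(4) gives vanishing of $R^p\bar f_*$ on $\Qcoh(U_x)$ and Proposition~\ref{basics}(6a) (using $Y$ semi-separated) gives vanishing of $R^pj_{f(x)*}$ on $\Qcoh(U_{f(x)})$; both functors are exact on quasi-coherents, so the Grothendieck composition yields the claim. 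Second, $R^p_{\text{qc}}f_*(\N_{U_x})=0$ for $p>0$: take an injective resolution $\N_{\vert U_x}\to\I^{\scriptscriptstyle\bullet}$ in $\Qcoh(U_x)\simeq\Mod(\OO_x)$ and apply $j_{x*}$ (exact on quasi-coherents by Proposition~\ref{basics}(6a), and preserving injectivity because its left adjoint $j_x^*$ is exact) to obtain an injective resolution $\N_{U_x}\to j_{x*}\I^{\scriptscriptstyle\bullet}$ in $\Qcoh(X)$; then $\RR_{\text{qc}}f_*(\N_{U_x})\simeq f_*(j_{x*}\I^{\scriptscriptstyle\bullet})=j_{f(x)*}\bar f_*\I^{\scriptscriptstyle\bullet}$, and the higher cohomology vanishes by the exactness of $j_{f(x)*}$ and $\bar f_*$ on quasi-coherents.

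Since the indexing set of chains is finite and both $R^pf_*$ and $R^p_{\text{qc}}f_*$ commute with finite products, each term $\Ce^i\M^q$ is acyclic for both derived functors. The uniform boundedness of the pseudo-Cech direction then gives, by a standard totalization/spectral-sequence argument, isomorphisms $\RR f_*\M\simeq f_*(\Ce^{\scriptscriptstyle\bullet}\M)$ in $D(Y)$ and $\RR_{\text{qc}} f_*\M\simeq f_*(\Ce^{\scriptscriptstyle\bullet}\M)$ in $D\Qcoh(Y)$. The image of the latter under the forgetful functor $D\Qcoh(Y)\to D(Y)$ is literally the same complex $f_*(\Ce^{\scriptscriptstyle\bullet}\M)$, and the canonical comparison map is identified with the identity under these isomorphisms; this is precisely the commutativity of the square.

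The main obstacle is the $\RR_{\text{qc}}f_*$-acyclicity of $\N_{U_x}$ in the quasi-coherent category: unlike $\RR f_*$, the derived functor $\RR_{\text{qc}}f_*$ must be computed via injective resolutions in $\Qcoh(X)$, and there is no a priori compatibility with injectives or flasques in $\Mod(X)$. The trick is to exploit the affine-local equivalence $\Qcoh(U_x)\simeq\Mod(\OO_x)$ to manufacture an injective resolution of $\N_{U_x}$ that remains explicit after $j_{x*}$ and is therefore tractable for the direct computation of $f_*$, avoiding any appeal to Bokstedt--Neeman or to properties of $K$-injectives beyond what is provided by Proposition~\ref{basics}.
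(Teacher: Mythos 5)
Your proposal is correct and follows essentially the same route as the paper: reduce via the pseudo-Cech resolution $\M\to\Ce^{\scriptscriptstyle\bullet}\M$ to the acyclicity of the terms $\M_{U_x}$ for both $f_*$ and $f_*^{qc}$, prove the first by factoring through $\bar f\colon U_x\to U_{f(x)}$ and the exactness statements of Proposition \ref{basics}, and the second by pushing forward an injective resolution from $\Qcoh(U_x)\simeq\Mod(\OO_x)$. The only cosmetic difference is that you phrase the conclusion as a totalization/spectral-sequence argument where the paper simply asserts that the resolution derives both functors.
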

				
\begin{proof} The proof consists on proving that the pseudo-Cech resolution $\M \to \Ce^{{\scriptscriptstyle\bullet}} \M$ allows us to derive both functors, and this reduces	to prove that for any $  x \in X$ and any quasi-coherent module $\M$ on $X$ one has:							
\begin{enumerate}
\item  $\RR f_* \M_{U_x} = f_* \M_{U_x}$.
\smallskip

\item  $\RR_{qc} f_* \M_{U_x} = f_* \M_{U_x}$.
\end{enumerate} 			
  
Let us consider the conmutative  diagram 
$$
\xymatrix{ X \ar[r]^f & Y \\ U_x \ar@{^(->}[u]^i \ar[r]^{\bar{f} \ \ } & \  \ U_{f(x)} \ar@{^(->}[u]_j .} 
$$

Let us prove (1). One has: $\M_{U_x} = i_* \M_{|U_x}= \RR i_* \M_{|U_x}$, because $\M_{|U_x}$ is cuasicoherent and $X$ is semi-separated. Thus:
$$\RR f_*\M_{U_x}= \RR f_* \RR i_* \M_{|U_x}=  \RR j_*(\RR \bar{f}_*  \M_{|U_x}).$$  Now, $\bar{f}_* \M_{|U_x}=\RR \bar f_*\M_{|U_x}$ because $f$ is schematic (see Proposition \ref{basics}) and $ j_* \bar{f}_* \M_{|U_x}= \RR j_* \bar{f}_* \M_{|U_x}$ because $Y$ is semi-separated. Hence, $\RR f_*\M_{U_x}= j_* \bar{f}_* \M_{|U_x}= f_* \M_{U_x}$.

Now let us prove (2). Let $ \mathcal{I}^{\scriptscriptstyle\bullet}$ be a resolution of $\M_{|U_x}$ by  injective quasi-coherent  modules. Since $i_*\colon \Qcoh(U_x)\to\Qcoh(X)$ is exact and takes injectives into injectives, we have that $i_* \mathcal{I}^{\scriptscriptstyle\bullet}$  is a resolution  of $\M_{U_x}$ by quasi-coherent  injective $\OO_X$-modules. Then
$$ \RR_{qc} f_* \M_{U_x}= f_* i_* \mathcal{I}^{\scriptscriptstyle\bullet} = j_* \bar{f}_* \mathcal{I}^{\scriptscriptstyle\bullet}. $$		
Now,   $j_* \bar{f}_* \mathcal{I}^{\scriptscriptstyle\bullet}$ is a resolution of $j_* \bar{f}_* \M_{|U_x}=f_* \M_{U_x}$, because  $\bar f_*\colon \Qcoh(U_x)\to\Qcoh(U_{f(x)})$ and $j_*\colon \Qcoh(U_{f(x)})\to\Qcoh(Y)$ are exact functors. 
\end{proof}


%
%
%
%
%

\begin{thm}\label{enoughflats} The category of quasi-coherent modules on a semi-separated finite space has enough flats.
\end{thm}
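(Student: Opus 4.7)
The plan is to produce, for each quasi-coherent $\M$, an epimorphism $\Pc^0\twoheadrightarrow\M$ in $\Qcoh(X)$ with $\Pc^0$ flat as an $\OO$-module; iterating on the kernel, which is again quasi-coherent because $\Qcoh(X)\subset\Mod(X)$ is an abelian subcategory, then produces the desired left resolution. So the whole theorem reduces to constructing a ``flat cover in degree zero''.

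The flat building blocks available to us are the modules $(j_x)_*\widetilde{F}$ for $x\in X$ and $F$ a flat $\OO_x$-module: by Proposition \ref{basics}, (6)(a), they are quasi-coherent, and by the Lemma embedded in the proof of Proposition \ref{prop properties}, (6), they are flat --- both conclusions relying on the semi-separation of $X$. For the given $\M$ I would take
\[ \Pc^0=\bigoplus_{x\in X}(j_x)_*\widetilde{F_x},\qquad F_x=\OO_x^{(\M_x)}, \]
equipped with the canonical free surjections $F_x\twoheadrightarrow\M_x$. The direct sum is again quasi-coherent (the inclusion $\Qcoh(X)\hookrightarrow\Mod(X)$ is the left adjoint of $\Qc$, hence commutes with colimits) and flat (direct sum of flats). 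The next step is to equip $\Pc^0$ with a morphism $\varphi=\sum_x\varphi_x\colon\Pc^0\to\M$ whose component $\varphi_x\colon(j_x)_*\widetilde{F_x}\to\M$ recovers at the stalk $x$ the chosen surjection $F_x\twoheadrightarrow\M_x$; once this is done, surjectivity of $\varphi$ at any $y\in X$ is immediate because the summand indexed by $x=y$ already covers $\M_y$.

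I expect the hard part to be the construction of the component $\varphi_x$. The adjunction $(j_x)^*\dashv(j_x)_*$ produces only morphisms \emph{into}, not out of, the pushforward, so one must supply, for each $y\in X$ with $U_x\cap U_y\neq\emptyset$, an $\OO_y$-linear map $F_x\otimes_{\OO_x}\OO(U_x\cap U_y)\to\M_y$ compatible with restriction to every common upper bound of $x$ and $y$ and with the quasi-coherence of $\M$. The flat base-change isomorphisms of Proposition \ref{basics}, (6)(b), identify sections over $U_x\cap U_y$ with the appropriate tensor products of stalks, and I expect they provide precisely the data needed to assemble the $\varphi_x$'s canonically from the surjections $F_x\twoheadrightarrow\M_x$. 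Once this is carried out, the remaining verifications --- surjectivity at each stalk, quasi-coherence and flatness of $\Pc^0$, and the inductive passage to the kernel --- are routine.
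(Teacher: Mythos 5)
You have correctly isolated the hard part of your plan, but it is not merely hard: it is impossible in general. There is no morphism of $\OO$-modules $\varphi_x\colon (j_x)_*\widetilde{F_x}\to\M$ whose stalk at $x$ is a prescribed surjection $F_x\to\M_x$. The pushforward $(j_x)_*$ is a \emph{right} adjoint, so nothing controls maps \emph{out} of it; at a point $y$ you would need a map $F_x\otimes_{\OO_x}\OO(U_x\cap U_y)\to\M_y$, whereas quasi-coherence and Proposition \ref{basics}(6)(b) only produce a map into $\M(U_x\cap U_y)$, which \emph{receives} the restriction from $\M_y=\M(U_y)$ rather than mapping to it. A concrete obstruction: let $X=\{y,z,w\}$ with $w>y$, $w>z$, $y$ and $z$ incomparable, $\OO_y=\OO_z=\ZZ$, $\OO_w=\mathbb{Q}$; this is a semi-separated finite space. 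Let $\M$ be the quasi-coherent module with $\M_y=\M_z=\ZZ$, $\M_w=\mathbb{Q}$ and the inclusions as restriction maps. For $F_y=\OO_y$, the stalk of $(j_y)_*\widetilde{F_y}$ at $z$ is $\widetilde{F_y}(U_y\cap U_z)=\widetilde{F_y}(\{w\})=\mathbb{Q}$, and compatibility with restriction to $w$ forces the component at $z$ to be a $\ZZ$-linear map $g\colon\mathbb{Q}\to\ZZ$ with $\iota\circ g=n\cdot\id_{\mathbb{Q}}$, where $n$ is the integer giving the component at $y$; hence $n=0$ and $g=0$, i.e.\ $\Hom_{\OO}((j_y)_*\widetilde{F_y},\M)=0$. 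Since moreover $\Hom_{\ZZ}(\mathbb{Q}^{(I)},\ZZ)=0$, the summands indexed by $z$ and $w$ also contribute $0$ at the stalk $y$, so \emph{every} morphism from your $\Pc^0$ to this $\M$ vanishes at $y$. The objects that do map onto $\M$ with prescribed stalk at $x$ are the extensions by zero $\OO^{U_x}$ appearing in the proof of Theorem \ref{resolutions} (one has $\Hom_{\OO}(\OO^{U_x},\M)=\M_x$, and they are flat since their stalks are $\OO_z$ or $0$), but these are not quasi-coherent; your proposal in effect conflates them with the pushforwards $(j_x)_*$.

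This is exactly why the paper's proof takes a detour through non-quasi-coherent flat objects: it chooses a resolution $\Pc\to\M$ by flat, not necessarily quasi-coherent, $\OO$-modules (which exist, e.g.\ built from the $\OO^{U}$'s), applies the functor $\Qc\circ\C^{\scriptscriptstyle\bullet}$, which by Proposition \ref{prop properties}(6) lands in flat quasi-coherent modules and by Theorem \ref{Qc-derived} computes $\RR\Qc(\Pc)\simeq\RR\Qc(\M)\simeq\M$, and then truncates at degree $0$, using that the kernel of the first map in a bounded exact sequence of flat modules is flat. Some such mechanism for converting flat non-quasi-coherent resolutions into quasi-coherent ones seems unavoidable; a direct covering of $\M$ by pushforwards from the $U_x$ cannot work.
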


\begin{proof} Let $\M$ be a quasi-coherent module on a semi-separated finite space $(X,\OO)$. Let $$\Pc= \cdots \to \Pc^{-1}\to \Pc^0\to   0$$ be a resolution of $\M$ by flat (non quasi-coherent) modules. Let $\QQ:=\Qc(\C^{\scriptscriptstyle\bullet}\Pc)$. By Proposition \ref{prop properties}, (6), $\QQ$ is a bounded above complex of flat quasi-coherent modules. Moreover, $\QQ\simeq\RR\Qc(\Pc)\simeq\RR\Qc(\M)\simeq \M$, hence $H^i(\QQ)=0$ for $i\neq 0$ and $H^0(\QQ)=\M$. Thus, 
\[ \tau_{\leq 0}\QQ= \cdots\to\QQ^{-2}\to\QQ^{-1}\to Z^0\to 0\] with $Z^0$ the 0-cycles of $\QQ$, is a resolution of $\M$ by quasi-coherent modules and we conclude if we prove that $Z^0$ is flat. This follows from the fact that
\[ 0\to Z^0\to \QQ^0\to\QQ^1\to\cdots\to\QQ^n\to 0\] is an exact sequence and $\QQ^i$ are flat.
\end{proof}


%
%
%
%

\section{Grothendieck duality}

\begin{thm}\label{generalduality} Let $f\colon X\to Y$ be a morphism between finite ringed spaces. The functor $\RR f_*\colon D(X)\to D(Y)$ has a right adjoint $f^!\colon D(Y)\to D(X)$.
\end{thm}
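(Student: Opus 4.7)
My plan is to apply Neeman's Brown representability theorem. Since $\Mod(X)$ is a Grothendieck abelian category (a set of generators is furnished by the sheaves $\OO^{U_x}$ for $x\in X$, using $\Hom_\OO(\OO^{U_x},\M)=\M_x$), the derived category $D(X)=D(\Mod(X))$ is well generated. Brown representability then reduces the existence of $f^!$ to the assertion that $\RR f_*\colon D(X)\to D(Y)$ preserves small coproducts.

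First I would verify that already the underived functor $f_*\colon\Mod(X)\to\Mod(Y)$ commutes with arbitrary direct sums. On a finite ringed space, the sections $\M(V)$ over any open $V$ can be written as a \emph{finite} limit: the sheaf axiom applied to the cover $V=\bigcup_{v\in V}U_v$ yields the equalizer $\M(V)=\ker\bigl(\prod_{v\in V}\M(U_v)\rightrightarrows\prod_{v,v'\in V}\M(U_v\cap U_{v'})\bigr)$, and iterating on $|V|$ reduces everything to finite limits built from the stalks. In a Grothendieck abelian category, finite limits commute with arbitrary direct sums (any direct sum is a filtered colimit of finite direct sums, finite direct sums commute with finite limits, and filtered colimits are exact). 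Evaluating at $V=f^{-1}(U_y)$ for each $y\in Y$ gives $(f_*\bigoplus_i\M_i)_y=\bigoplus_i(f_*\M_i)_y$, so $f_*$ preserves direct sums.

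Next I would pass to $\RR f_*$ using the standard flasque resolution $\M\to\C^{\scriptscriptstyle\bullet}\M$ of Theorem \ref{resolutions}(1), which has length $n=\dim X<\infty$. For any (possibly unbounded) complex $\M$ the totalization $\C^{\scriptscriptstyle\bullet}\M$ is horizontally bounded, hence defined termwise by a finite direct sum; it remains quasi-isomorphic to $\M$ and consists of flasque (hence $f_*$-acyclic) modules, so $\RR f_*\M\simeq f_*\C^{\scriptscriptstyle\bullet}\M$. Because $(\C^i\M)(U)=\prod_{(x_0<\cdots<x_i)\in U}\M_{x_i}$ is a finite product, each $\C^i$ commutes with direct sums, and combined with the previous paragraph this shows that $f_*\C^{\scriptscriptstyle\bullet}$, and therefore $\RR f_*$, preserves small coproducts. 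Brown representability then produces the right adjoint $f^!$.

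The main obstacle I anticipate is the unbounded case: one must be sure that $\RR f_*$ is computed by $f_*\C^{\scriptscriptstyle\bullet}(-)$ on all of $D(X)$ and that coproduct-preservation passes through to the derived level. The finiteness of $\dim X$ is precisely what makes this transparent, since the totalization of a double complex bounded in one direction is degreewise a finite sum, so exactness, flasqueness and compatibility with arbitrary direct sums all transfer without any convergence subtleties.
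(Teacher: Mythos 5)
Your proof is correct, but it takes a genuinely different route from the paper's. You obtain $f^!$ abstractly from Neeman's Brown representability theorem for well-generated triangulated categories, after checking that $\RR f_*$ preserves small coproducts; the paper instead constructs $f^!$ explicitly. It observes that each $f_*\C^p\colon\Mod(X)\to\Mod(Y)$ is exact and commutes with filtered direct limits, hence (by the adjoint functor theorem at the abelian level) $\M\mapsto\Hom(f_*\C^p\M,\Nc)$ is representable by an $\OO_X$-module $f^{-p}\Nc$; assembling these into a bicomplex gives a functor $f^\nabla$ with $\Hom^{\scriptscriptstyle\bullet}(f_*\C^{\scriptscriptstyle\bullet}\M,\Nc)=\Hom^{\scriptscriptstyle\bullet}(\M,f^\nabla\Nc)$, which takes $K$-injectives to $K$-injectives, and one sets $f^!\Nc:=f^\nabla\I$ for a $K$-injective resolution $\Nc\to\I$. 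Both arguments share the same engine: the standard resolution $\C^{\scriptscriptstyle\bullet}$, its finite length $n=\dim X$, the exactness of each $\C^i$, and the identification $\RR f_*\simeq f_*\C^{\scriptscriptstyle\bullet}(-)$ on unbounded complexes; your verification that $f_*$ and each $\C^i$ commute with direct sums (via finite limits over the finite poset commuting with coproducts in an AB5 category) is essentially the colimit-preservation of $f_*\C^p$ that the paper exploits. What your approach buys is brevity and a standard template, at the cost of invoking the nontrivial fact that the derived category of a Grothendieck abelian category is well generated; what the paper's approach buys is an explicit description of $f^!$ with no appeal to Brown representability. Your closing caveat is the right one, and it is handled identically in both proofs: the finiteness of $\dim X$ makes the totalization degreewise a finite sum, so flasqueness and acyclicity pass to unbounded complexes without convergence issues.
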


\begin{proof} Let $n=\dim X$. For each $0\leq p\leq n$ the functor $f_*\C^p\colon \Mod(X)\to\Mod(Y)$ is exact and conmmutes with filtered direct limits. Thus, for each $\Nc\in \Mod(Y)$ the functor
\[ \aligned \Mod(X)&\to \text{ Abelian groups }\\ \M &\leadsto \Hom(f_*\C^p(\M),\Nc)\endaligned\] is representable by an $\OO_X$-module $f^{-p}\Nc$. A morphism of $\OO_Y$-modules $\Nc\to \Nc'$ induces a morphism of $\OO_X$-modules $f^{-p}\Nc\to f^{-p}\Nc'$. Moreover, the natural morphism $f_*\C^p\to f_*\C^{p+1}$ induces a morphism $f^{-p-1}\Nc\to f^{-p}\Nc$. Thus, if $\N$ is a complex of $\OO_Y$-modules, we obtain a bicomplex $f^{-p}\Nc^q$ (whis is zero whenever $p<0$ or $p>n$) whose associated simple complex shall be denoted by $f^\nabla\Nc$. For any complex $\M$ of $\OO_X$-modules, the isomorphism
\[ \Hom_{\OO_Y}(f_*\C^p(\M^i),\Nc^q)=\Hom_{\OO_X}(\M^i,f^{-p}\Nc^q)\] extends to a complex isomorphism
\[ \Hom^{\scriptscriptstyle\bullet}(f_*\C^{\scriptscriptstyle\bullet}(\M),\Nc)=\Hom^{\scriptscriptstyle\bullet}(\M,f^\nabla\Nc).\] Thus, if $\Nc$ is $K$-injective, $f^\nabla\Nc$ is $K$-injective too. For any $\Nc\in D(Y)$, we define $f^!\Nc:= f^\nabla \I$, with $\Nc\to\I$ a $K$-injective resolution, and one has
\[ \Hom^{\scriptscriptstyle\bullet}(f_*\C^{\scriptscriptstyle\bullet}(\M),\I)=\Hom^{\scriptscriptstyle\bullet}(\M,f^\nabla\I) \] and then
\[ \RR\Hom^{\scriptscriptstyle\bullet}(\RR f_*(\M),\Nc)=\RR\Hom^{\scriptscriptstyle\bullet}(\M,f^!\Nc). \]

\end{proof}

\begin{thm}\label{SchematicDuality} Let $f\colon X\to Y$ be a morphism between finite spaces (Definition \ref{finitespace}). One has:
\begin{enumerate}
\item The functor $\RR f_*\colon D\Qcoh (X)\to D(Y)$, composition of $D\Qcoh(X)\to D(X)$ and $  D(X)\overset{\RR f_*}\to D(Y)$,  has a right adjoint.

\item If $f$ is schematic, the functor $\RR f_* \colon D\Qcoh(X)\to D_{qc}(Y)$ has a right adjoint.

\item If $X$  and $f$ are schematic, the functor $\RR f_*\colon D_{qc}(X)\to D_{qc}(Y)$ has a right adjoint.

\item If $f$ is  schematic and $X, Y$ are  semi-separated,  the functor $\RR_{qc} f_*\colon D\Qcoh (X)\to D\Qcoh(Y)$ has a right adjoint.
\end{enumerate}
\end{thm}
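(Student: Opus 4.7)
My plan is to handle the four parts in order. Parts (1) and (2) will come from adapting the construction of Theorem \ref{generalduality} to the quasi-coherent setting, while parts (3) and (4) follow by composing $f^!$ with the right adjoints provided by Corollary \ref{DqcInD} and the equivalence of Theorem \ref{DQc=D_qc}.

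For part (1), I mimic the construction of Theorem \ref{generalduality} with the source replaced by $\Qcoh(X)$. For each $0\leq p\leq\dim X$ the functor $\Qcoh(X)\to\Mod(Y)$, $\M\mapsto f_*\C^p\M$, is exact (since $\C^p$ and $f_*$ are exact and $\Qcoh(X)$ is an exact abelian subcategory of $\Mod(X)$) and commutes with filtered direct limits (since the inclusion $\Qcoh(X)\hookrightarrow\Mod(X)$ preserves them). Because $\Qcoh(X)$ is a Grothendieck category, the special adjoint functor theorem supplies a right adjoint $f^{-p}_{qc}\colon\Mod(Y)\to\Qcoh(X)$. Assembling the $f^{-p}_{qc}\Nc^q$ into a bicomplex and taking its total complex gives a functor $f^\nabla_{qc}$; for a $K$-injective resolution $\Nc\to\I$ in $\Mod(Y)$, I set $f^!_{qc}\Nc:=f^\nabla_{qc}\I$. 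The complex-level identity
$$\Hom^{\scriptscriptstyle\bullet}_{\OO_Y}(f_*\C^{\scriptscriptstyle\bullet}\M,\I)=\Hom^{\scriptscriptstyle\bullet}_{\OO_X}(\M,f^\nabla_{qc}\I),$$
together with $\C^{\scriptscriptstyle\bullet}\M$ being a flasque resolution computing $\RR f_*\M$, then yields the derived adjunction. Part (2) is immediate: Proposition \ref{basics}(4) ensures $\RR f_*$ lands in $D_{qc}(Y)$ when $f$ is schematic, and restricting the adjoint of (1) to $\Nc\in D_{qc}(Y)\subset D(Y)$ serves as the required right adjoint, using that $D_{qc}(Y)\hookrightarrow D(Y)$ is fully faithful.

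For part (3), since $X$ is schematic Corollary \ref{DqcInD} supplies a right adjoint $(-)_{qc}\colon D(X)\to D_{qc}(X)$ to the inclusion. For $\Nc\in D_{qc}(Y)\subset D(Y)$ I take $f^!_{qc}\Nc:=(f^!\Nc)_{qc}$, where $f^!$ is the adjoint of Theorem \ref{generalduality}; the adjunction is then obtained from the chain
$$\Hom_{D_{qc}(Y)}(\RR f_*\M,\Nc)=\Hom_{D(X)}(\M,f^!\Nc)=\Hom_{D_{qc}(X)}(\M,(f^!\Nc)_{qc})$$
for any $\M\in D_{qc}(X)$, using full faithfulness of $D_{qc}(Y)\hookrightarrow D(Y)$ in the first equality. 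Part (4) then reduces to (3): since semi-separated implies schematic, Theorem \ref{DQc=D_qc} yields equivalences $D\Qcoh(X)\simeq D_{qc}(X)$ and $D\Qcoh(Y)\simeq D_{qc}(Y)$, and Theorem \ref{Rqc=R} identifies $\RR_{qc}f_*$ with $\RR f_*\colon D_{qc}(X)\to D_{qc}(Y)$ under these equivalences, so (3) concludes.

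The main obstacle lies in (1): verifying that the complex-level adjunction actually descends to the derived level, i.e.\ that $f^\nabla_{qc}\I$ is ``$K$-injective enough'' with respect to the image of $\RR f_*$. Concretely, what one needs is that $f_*\C^{\scriptscriptstyle\bullet}\M$ is acyclic in $\Mod(Y)$ whenever $\M$ is an acyclic complex of quasi-coherent $\OO_X$-modules. Acyclicity of the total complex $\C^{\scriptscriptstyle\bullet}\M$ follows from the exactness of each $\C^p$ and the boundedness of the bicomplex in the $p$-direction; preservation of acyclicity under $f_*$ uses that each $\C^p\M^q$ is flasque (Theorem \ref{resolutions}(1)) and that $f_*$ has finite cohomological dimension on the finite space $X$, so acyclic complexes of flasques go to acyclic complexes.
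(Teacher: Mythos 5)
Your parts (2), (3) and (4) coincide with the paper's proof: (2) restricts the adjoint of (1) along the full subcategory $D_{qc}(Y)\subset D(Y)$ after noting that $\RR f_*$ lands there, (3) composes $f^!$ from Theorem \ref{generalduality} with the right adjoint of $D_{qc}(X)\hookrightarrow D(X)$ provided by Corollary \ref{DqcInD}, and (4) transports (3) through the equivalences of Theorem \ref{DQc=D_qc} using the compatibility of Theorem \ref{Rqc=R}; this is exactly the paper's argument. The only genuine divergence is in (1). The paper obtains (1) with no new construction, simply as a composition of right adjoints: the inclusion $D\Qcoh(X)\to D(X)$ already has the right adjoint $\RR\Qc$ (recorded in the section on quasicoherentation), so $\RR\Qc\circ f^!$ is a right adjoint of $\RR f_*\colon D\Qcoh(X)\to D(Y)$. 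You instead rerun the construction of Theorem \ref{generalduality} with source category $\Qcoh(X)$, representing $\M\mapsto\Hom_{\OO_Y}(f_*\C^p\M,\Nc)$ inside the Grothendieck category $\Qcoh(X)$ and totalizing. This is correct, and your verification that the complex-level adjunction descends to derived categories --- acyclicity of $f_*\C^{\scriptscriptstyle\bullet}\M$ for acyclic $\M$, via exactness and boundedness of $\C^{\scriptscriptstyle\bullet}$, flasqueness of the $\C^p\M^q$, and finite cohomological dimension of $f_*$ on a finite space --- makes explicit a point the paper leaves implicit even in Theorem \ref{generalduality} itself. The trade-off is that you redo work the paper has already packaged into $\RR\Qc$ and $f^!$: the paper's route is a one-liner given its prior results, while yours is more self-contained and spells out the $K$-injectivity bookkeeping.
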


\begin{proof} (1) follows from Theorem \ref{generalduality} and from the fact that $D\Qcoh(X)\to D(X)$ has a right adjoint (in fact, $\RR\Qc$).

(2) Since $f$ is schematic, $\RR f_*$ maps  $D_{qc}(X)$ into $D_{qc}(Y)$. We conclude by (1).

(3) follows from Theorem \ref{generalduality} and Corollary \ref{DqcInD}.

(4) follows from (3) and Theorems  \ref{DQc=D_qc},  \ref{Rqc=R}.
\end{proof}

\section{Schemes}

Let $S$  be a quasi-compact and quasi-separated scheme. It is proved in \cite{Sancho2} that there exists a schematic finite space $(X,\OO)$ and a morphism of ringed spaces $$\pi\colon (S,\OO_S)\to (X,\OO)$$ such that, for any $x\in X$, the preimage $U^x:=\pi^{-1}(U_x)$ is an affine scheme. Moreover, $S$ is a semi-separated scheme if and only if $(X,\OO)$ is a semi-separated finite space. We say that $S\to X$ is a {\it finite model} of $S$. If $f\colon T\to S$ is a morphism of schemes between quasi-compact and quasi-separated schemes, one can find finite models $X$ and $Y$ of $T$ and $S$ and a (schematic) morphism $\bar f \colon X \to Y$ making the diagram
\[ \xymatrix{T\ar[r]^f\ar[d] & S\ar[d] \\ X \ar[r]^{\bar f } & Y
}\] commutative, and we say that $\bar f $ is a finite model of $f$.

\begin{thm}(\cite[Thm. 3.15]{Sancho2})\label{comparison} Let $S$ be a quasi-compact and quasi-separated scheme and let $\pi\colon S\to X$ be a finite model. The functors 
\[ \pi_*\colon \Qcoh(S)\to\Qcoh(X),\qquad \pi^*\colon\Qcoh(X)\to \Qcoh (S)\] are exact and mutually inverse. In particular, we obtain an equivalence $D\Qcoh(S)\simeq D\Qcoh(X)$.
\end{thm}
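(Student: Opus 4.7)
The plan is to establish four points: (a) $\pi_*$ is exact on $\Qcoh(S)$, (b) $\pi^*$ is exact, (c) the adjunction counit $\pi^*\pi_*\M\to\M$ is an isomorphism for $\M\in\Qcoh(S)$, and (d) the adjunction unit $\N\to\pi_*\pi^*\N$ is an isomorphism for $\N\in\Qcoh(X)$. Once (a)--(d) are in place, $(\pi^*,\pi_*)$ restricts to a pair of exact mutually inverse equivalences between $\Qcoh(S)$ and $\Qcoh(X)$, and the derived equivalence then follows by applying them termwise to complexes.

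For (a), I would use the fundamental identity $(\pi_*\M)_x = (\pi_*\M)(U_x) = \M(\pi^{-1}(U_x)) = \M(U^x)$. Since $U^x$ is an affine scheme and $\M$ is a quasi-coherent $\OO_S$-module, $\Gamma(U^x,-)$ is exact on $\Qcoh(U^x)$, so $\pi_*$ is exact. For (b), I would show that the ring morphism $\pi^{-1}\OO_X\to\OO_S$ is stalkwise flat: at $s\in S$ it is the map $\OO_{X,\pi(s)}=\OO(U_{\pi(s)})=\OO_S(U^{\pi(s)})\to\OO_{S,s}$, which is a localization of rings and hence flat. Exactness of $\pi^*=\pi^{-1}(-)\otimes_{\pi^{-1}\OO_X}\OO_S$ follows.

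The core computation for (c) and (d) is the identification of $\pi^*\N$ over each affine $U^x$. For $\N\in\Qcoh(X)$ and $s\in U^x$, quasi-coherence on $X$ gives $\N_{\pi(s)}=\N_x\otimes_{\OO_{X,x}}\OO_{X,\pi(s)}$, hence
\[ (\pi^*\N)_s = \N_{\pi(s)}\otimes_{\OO_{X,\pi(s)}}\OO_{S,s} = \N_x\otimes_{\OO_S(U^x)}\OO_{S,s}, \]
using the key identity $\OO_{X,x}=\OO_S(U^x)$ built into a finite model. Thus $\pi^*\N_{|U^x}$ is the quasi-coherent module on the affine scheme $U^x$ associated to $\N_x$, whose global sections give $\N_x$; this proves (d) by computing $(\pi_*\pi^*\N)_x=(\pi^*\N)(U^x)=\N_x$ and checking that the unit induces the identity on stalks. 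For (c), at $s\in S$ with $x=\pi(s)$,
\[ (\pi^*\pi_*\M)_s = (\pi_*\M)_x\otimes_{\OO_{X,x}}\OO_{S,s} = \M(U^x)\otimes_{\OO_S(U^x)}\OO_{S,s} = \M_s, \]
the final equality being quasi-coherence of $\M$ on the affine $U^x$.

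The main obstacle I expect is not the stalkwise calculations themselves but the passage from stalks to genuine sheaf-level identifications of the unit and counit, together with verifying that the resulting isomorphisms are natural and that the restrictions $\pi^*\N_{|U^x}$ really are the quasi-coherent sheaves on the schemes $U^x$ claimed above; this is where the definition of a finite model ($U^x$ affine, $\OO(U_x)=\OO_S(U^x)$) does all the work. Once the quasi-coherent equivalence is in hand, exactness of both functors ensures $\pi^*$ and $\pi_*$ send quasi-isomorphisms to quasi-isomorphisms, so they descend to the mutually inverse equivalence $D\Qcoh(S)\simeq D\Qcoh(X)$ stated in the theorem.
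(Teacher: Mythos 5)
The paper does not actually prove this statement: it is imported from \cite[Thm.~3.15]{Sancho2}, so there is no internal proof to compare against. Your blind argument is the expected one and is correct in substance. Two points are worth making explicit to close it fully. First, you never verify that $\pi_*$ really lands in $\Qcoh(X)$, i.e.\ that for $x\leq x'$ the induced map $\M(U^x)\otimes_{\OO_S(U^x)}\OO_S(U^{x'})\to \M(U^{x'})$ is an isomorphism; this is the same ``quasi-coherence on affines'' computation you use for the counit, applied to the affine open inclusion $U^{x'}\subseteq U^x$, so it costs nothing but should be stated. Second, the sheaf-level identification $\pi^*\N_{|U^x}\simeq \widetilde{\N_x}$ that you flag as the main obstacle is obtained cleanly by factoring rather than by gluing stalkwise isomorphisms: since $\N_{|U_x}=\rho^*(\N_x)$ for $\rho\colon (U_x,\OO_{|U_x})\to(*,\OO_x)$ (this is the equivalence $\Qcoh(U_x)\simeq\Mod(\OO_x)$ recalled in Section~1), one gets $\pi^*\N_{|U^x}=(\rho\circ\pi_{|U^x})^*(\N_x)$, and $\rho\circ\pi_{|U^x}\colon U^x\to(*,\OO_S(U^x))$ is the canonical map from the affine scheme $U^x$ to the one-point space with its ring of global sections, whose pullback is exactly the associated-sheaf functor $M\mapsto\widetilde M$ on $\operatorname{Spec}\OO_S(U^x)$. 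With these two remarks your steps (a)--(d) all go through, naturality is automatic because you are only checking that the existing unit and counit are isomorphisms, and the derived equivalence follows from exactness as you say.
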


 Let us see now that the same happens with the derived categories of complexes with quasi-coherent cohomology. We shall need the following:

\begin{thm}{(\cite[Thm. 5.1]{BokstedtNeeman})}\label{BN} Let $S=\Spec A$ be an affine scheme and $\pi\colon \Spec A\to (*,A)$ the natural morphism. The functor
\[\pi^*\colon D (A) \to D_{qc}(S)\] is an equivalence.
\end{thm}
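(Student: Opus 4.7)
The plan is to set up the adjunction $\pi^*\dashv\RR\pi_*$ at the level of unbounded derived categories, using $K$-injective resolutions in the Grothendieck category $\Mod(\OO_S)$ to construct $\RR\pi_*=\RR\Gamma(S,-)$, and then to check that the unit $M^{\scriptscriptstyle\bullet}\to\RR\pi_*\pi^*M^{\scriptscriptstyle\bullet}$ is an isomorphism on all of $D(A)$ and that the counit $\pi^*\RR\pi_*\N^{\scriptscriptstyle\bullet}\to\N^{\scriptscriptstyle\bullet}$ is an isomorphism on all of $D_{qc}(S)$. The classical inputs are Grothendieck's equivalence $\widetilde{(-)}\colon\Mod(A)\overset\sim\to\Qcoh(S)$ (which is $\pi^*$ restricted to $A$-modules) and Serre's vanishing $H^i(S,\widetilde{M})=0$ for $i>0$, so that the cohomological dimension of $S$ on quasi-coherent sheaves is zero.

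For bounded-below complexes both checks reduce to the hypercohomology spectral sequence
\[ E_2^{p,q}=H^p(S,H^q(\N^{\scriptscriptstyle\bullet}))\Rightarrow H^{p+q}(\RR\Gamma(S,\N^{\scriptscriptstyle\bullet})): \]
Serre vanishing forces collapse, giving $H^q(\RR\pi_*\N^{\scriptscriptstyle\bullet})=\Gamma(S,H^q(\N^{\scriptscriptstyle\bullet}))$, and the abelian equivalence $\widetilde{(-)}$ then shows that the counit induces an isomorphism on each cohomology sheaf; the unit argument is formally the same. A five-lemma applied to the canonical truncation triangles promotes this to all of $D^+(A)$ and $D_{qc}^+(S)$.

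The hardest step is the unbounded case, where the hypercohomology spectral sequence need not converge. I would handle this via compact generation: $\OO_S=\pi^*A$ is a compact generator of $D_{qc}(S)$ with
\[ \Hom_{D_{qc}(S)}(\OO_S,\OO_S[n])=\Hom_{D(A)}(A,A[n]), \]
$\pi^*$ preserves small coproducts and compact objects, and the counit is tautologically an isomorphism on the generator. A standard argument (Neeman's variant of Brown representability, or devissage on the smallest coproduct-closed triangulated subcategory containing $\OO_S$) then extends the equivalence from the compact objects to the whole of $D_{qc}(S)$. An alternative route invokes Spaltenstein's $K$-injective machinery on $\Mod(\OO_S)$, showing that $\RR\pi_*$ commutes with the homotopy limits arising from canonical truncations via a Mittag-Leffler condition forced by the zero cohomological dimension of affine schemes; this yields the unbounded unit/counit isomorphisms directly without the generator formalism.
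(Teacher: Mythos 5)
The paper does not prove this statement at all: Theorem \ref{BN} is quoted verbatim from \cite[Thm. 5.1]{BokstedtNeeman} and used as an external input to Theorem \ref{comparison2}, so there is no internal argument to measure yours against. Judged on its own terms and against the cited source, your sketch is essentially the standard proof and is correct in outline: the adjunction $\pi^*\dashv\RR\pi_*$ via $K$-injectives, and the reduction of the bounded-below case to Serre vanishing through the hypercohomology spectral sequence plus the abelian equivalence $\Mod(A)\simeq\Qcoh(S)$, are exactly right.

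The one soft spot is the logical ordering of your two routes for the unbounded case. The compact-generation argument, which you present as primary, does not bypass the hard step: to know that $\OO_S$ \emph{generates} $D_{qc}(S)$ you must show that $\Hom_{D(S)}(\OO_S,K[n])=H^n(S,K)=0$ for all $n$ forces $K=0$ for an arbitrary (in particular unbounded-below) $K\in D_{qc}(S)$, and to know that $\OO_S$ is \emph{compact} in $D_{qc}(S)$ you need $H^0(S,-)$ to commute with coproducts there; both facts already require the degeneration $H^m(S,K)=\Gamma(S,H^m(K))$ for unbounded $K$, which is precisely the content of the theorem. So your ``alternative route'' is in fact the essential one, and it is the one Bökstedt and Neeman actually use (their paper is, after all, about homotopy limits): write $K\simeq\operatorname{holim}_n\tau^{\geq -n}K$, use that $\RR\Gamma(S,-)$ preserves derived limits, and use Serre vanishing together with the eventual stabilization of the systems $H^m(S,\tau^{\geq -n}K)$ (so that $\lim^1$ vanishes) to conclude. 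Two details there deserve explicit mention in a complete write-up: the identification $K\simeq\operatorname{holim}_n\tau^{\geq -n}K$ in $D(\OO_S)$ is not formal, since countable products are not exact in $\Mod(\OO_S)$, and it is here that the vanishing of higher cohomology on affine opens is used a second time; and once this degeneration is in hand, both the unit and counit checks, and the compact-generation formalism if you still want it, follow immediately.
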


\begin{rem}\label{remarkBN2} \begin{enumerate} \item Since $\RR\pi_*\colon D_{qc}(S)\to D(A)$ is a right adjoint of $\pi^*$, it is its inverse. Thus the natural morphisms $M\to \RR\pi_*\pi^*M$ and $\pi^*\RR\pi_*\M\to\M$ are isomorphisms, for any $M\in D(A)$, $\M\in D_{qc}(S)$. Notice also that $\RR\pi_*=\RR\Gamma(S,\quad)$.
\item If $S'=\Spec A'$ is an affine open subscheme of $S$, then $j^*\pi^*M \simeq {\pi'}^*(M\otimes_A A')$, where $j\colon S'\hookrightarrow S$ is the natural immersion and $\pi'\colon S'\to (*,A')$ is the natural morphism. It follows that  the natural morphism
\[ \RR\Gamma(S,\M)\otimes_AA'\to\RR\Gamma(S',\M)\] is an isomorphism.
\end{enumerate}
\end{rem}

\begin{thm}\label{comparison2} Let $S$ be a quasi-compact and quasi-separated  scheme and $\pi\colon S\to X$   a finite model. For any $\M\in D_{qc}(S)$, $\RR\pi_*\M$ belongs to $D_{qc}(X)$ and the functors 
\[ \RR\pi_*\colon D_{qc} (S)\to D_{qc}(X),\qquad \pi^*\colon D_{qc}(X)\to D_{qc} (S)\] are mutually inverse.
\end{thm}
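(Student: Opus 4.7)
\smallskip

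The plan is to work locally on $S$ via the affine open cover $\{U^x=\pi^{-1}(U_x)\}_{x\in X}$, exploiting the Bokstedt--Neeman equivalence of Theorem \ref{BN} for each $U^x=\Spec A_x$ (with $A_x=\OO_{X,x}=\OO_S(U^x)$), together with the local equivalence $D_{qc}(U_x)\simeq D(A_x)$ of Remark \ref{remarkBN}. Throughout I would use that $\pi$ is flat (for every $s\in U^x$, $\OO_{S,s}$ is a localization of $A_x$), so $\pi^*$ is exact, extends to an exact functor $D(X)\to D(S)$ left adjoint to $\RR\pi_*$, and preserves quasi-coherent cohomology (since $\pi^*\colon\Qcoh(X)\to\Qcoh(S)$ is exact by Theorem \ref{comparison}).

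\smallskip

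\textbf{Step 1 (Qc-cohomology of $\RR\pi_*\M$).} For $\M\in D_{qc}(S)$ the stalk of $\RR\pi_*\M$ at $x\in X$ is $(\RR\pi_*\M)_x=\RR\Gamma(U^x,\M)$, an object of $D(A_x)$. For $x\leq x'$ the restriction map $(\RR\pi_*\M)_x\to(\RR\pi_*\M)_{x'}$ is the natural $\RR\Gamma(U^x,\M)\to\RR\Gamma(U^{x'},\M)$. By Remark \ref{remarkBN2}(2), the induced morphism
\[ \RR\Gamma(U^x,\M)\otimes_{A_x}A_{x'}\longrightarrow \RR\Gamma(U^{x'},\M) \]
is an isomorphism (we can drop the L because $A_x\to A_{x'}$ is flat). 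This is precisely the criterion recalled at the beginning of \S 3 for $\RR\pi_*\M$ to lie in $D_{qc}(X)$.

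\smallskip

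\textbf{Step 2 (Unit is an isomorphism).} Fix $x\in X$ and consider the commutative square of ringed spaces
\[ \xymatrix{ U^x \ar[r]^{\bar\pi}\ar[rd]_{\tau} & U_x\ar[d]^{\sigma}\\ & (*,A_x) }\]
By Remark \ref{remarkBN}, $\sigma^*$ and $\sigma_*$ are mutually inverse equivalences between $D_{qc}(U_x)$ and $D(A_x)$; for $\N\in D_{qc}(X)$ we get $\N|_{U_x}\simeq \sigma^*\N_x$. Using Step~1 and $\sigma\bar\pi=\tau$, I compute
\[ (\RR\pi_*\pi^*\N)_x=\RR\Gamma(U^x,\bar\pi^*(\N|_{U_x}))\simeq \RR\Gamma(U^x,\bar\pi^*\sigma^*\N_x)=\RR\Gamma(U^x,\tau^*\N_x)\simeq \N_x, \]
where the last isomorphism is Theorem \ref{BN}. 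A diagram chase shows this isomorphism identifies with the unit of the adjunction $\pi^*\dashv\RR\pi_*$; hence $\N\to\RR\pi_*\pi^*\N$ is an isomorphism.

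\smallskip

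\textbf{Step 3 (Counit is an isomorphism).} For $\M\in D_{qc}(S)$ I check that $\pi^*\RR\pi_*\M\to\M$ is an isomorphism on each affine open $U^x$. By Step 1, $(\RR\pi_*\M)|_{U_x}\in D_{qc}(U_x)$, so $(\RR\pi_*\M)|_{U_x}\simeq \sigma^*\sigma_*(\RR\pi_*\M)|_{U_x}=\sigma^*\RR\Gamma(U^x,\M)$. Therefore
\[ (\pi^*\RR\pi_*\M)|_{U^x}=\bar\pi^*(\RR\pi_*\M)|_{U_x}\simeq \bar\pi^*\sigma^*\RR\Gamma(U^x,\M)=\tau^*\RR\Gamma(U^x,\M)\simeq \M|_{U^x}, \]
again by Theorem \ref{BN}. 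Tracing the adjunction, this isomorphism coincides with the restriction of the counit.

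\smallskip

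The serious obstacle is not any single computation but making sure the abstract identifications furnished by Theorem \ref{BN} and by the equivalences $\sigma^*\dashv\sigma_*$ are natural with respect to the unit and counit of $\pi^*\dashv\RR\pi_*$; this is a routine but somewhat tedious compatibility check, and is where one must be careful to exhibit the genuine adjunction maps rather than merely abstract isomorphisms between source and target. Everything else is formal once Steps~1--3 are established.
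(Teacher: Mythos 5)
Your proposal is correct and follows essentially the same route as the paper: verify quasi-coherence of the cohomology of $\RR\pi_*\M$ stalkwise via Remark \ref{remarkBN2}, then check that the unit and counit of $\pi^*\dashv\RR\pi_*$ are isomorphisms locally on each $U_x$ and $U^x$ respectively, using the affine Bokstedt--Neeman equivalence and the local equivalence $D_{qc}(U_x)\simeq D(\OO_x)$. Your remarks on the naturality of the local identifications with the actual adjunction maps make explicit a compatibility the paper leaves implicit, but the argument is the same.
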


\begin{proof} Let $\M\in D_{qc}(S)$. In order to prove that $\RR\pi_*\M$ has quasi-coherent cohomology, we have to see that for any $x\leq x'$ the natural morphism
\[(\RR\pi_*\M)_x\otimes_{\OO_x}\OO_{x'}\to (\RR\pi_*\M)_{x'}\] is an isomorphism. Since $(\RR\pi_*\M)_x=\RR\Gamma(U^x,\M)$, this follows from Remark \ref{remarkBN2}. Now, for any $x\in X$, the functors
\[ \RR\pi_*\colon D_{qc}(U^x)\to D_{qc}(U_x),\quad  \pi^*\colon D_{qc}(U_x)\to D_{qc}(U^x)\] are mutually inverse, since both categories are equivalent to $D(\OO_x)$ by Theorems \ref{BN} and Remark \ref{remarkBN}. For any $\M\in D_{qc}(S)$, the natural morphism $\pi^*\RR\pi_*\M\to\M$ is an isomorphism, because it is so after restricting to each $U^x$. For any $\Nc\in D_{qc}(X)$ the natural morphism $\Nc\to\RR\pi_*\pi^*\Nc$ is an isomorphism because it is so after restricting to each $U_x$.
\end{proof}

\begin{rem}\label{remark} (1) Let $S$ be a quasi-compact and quasi-separated  scheme and $\pi\colon S\to X$   a finite model. The diagram
\[ \xymatrix{D\Qcoh(S)\ar[d]^{\pi_*}_{\wr}\ar[r] & D_{qc}(S)\ar[d]^{\RR \pi_*}_{\wr} \\ D\Qcoh(X)\ar[r]  & D_{qc}(X)
} \] (whose vertical morphisms are isomorphisms by Theorems \ref{comparison} and \ref{comparison2}) is commutative. Indeed, let us denote $i\colon D\Qcoh(S)\to D_{qc}(S)$ and $j\colon D\Qcoh(X)\to D_{qc}(X)$ the natural functors.  For any $\Nc\in D\Qcoh(X)$, one has $\pi^*j(\Nc)=i(\pi^*\Nc)$. One concludes because $\pi^*$ is the inverse of $\pi_*$ and $\RR \pi_*$.

(2) Let  $f\colon T\to S$ be a morphism of schemes between quasi-compact and quasi-separated schemes and let $ \xymatrix{T\ar[r]^f\ar[d]_{p} & S\ar[d]^{q} \\ X \ar[r]^{\bar f } & Y
}$ be a finite model.   

The diagram  (whose vertical morphisms are isomorphisms by Theorem  \ref{comparison2})
\[  \xymatrix{D_{qc}(T)\ar[d]^{\RR p_*}_{\wr}\ar[r]^{\RR f_*} & D_{qc}(S)\ar[d]^{\RR q_*}_{\wr} \\ D_{qc}(X)\ar[r]^{\RR {\bar f}_*}  & D_{qc}(Y)
}   \] is commutative. This is clear: $\RR q_*\circ\RR f_*=\RR (q\circ f)_* = \RR (\bar f\circ p)_*= \RR {\bar f}_* \circ \RR p_*$.  
\end{rem}

Now, Theorems \ref{comparison}, \ref{comparison2} and Remark \ref{remark} allow to transport the results obtained on finite spaces (Theorems \ref{DQc=D_qc},  \ref{Rqc=R}, \ref{enoughflats} and \ref{SchematicDuality})  to schemes:   

\begin{thm}\label{schemes} Let $S$ be a quasi-compact and semi-separated scheme. Then

(1.a) The natural functor $D\Qcoh (S)\to D_{qc}(S)$ is an equivalence {\rm (\cite{BokstedtNeeman})}.

(1.b) The category $\Qcoh(S)$ has enough flats {\rm (\cite{Murfet})}.

\medskip

Let  $f \colon T \to S$ be a morphism of schemes between quasi-compact and quasi-separated schemes. Then

(2.a) The functor $\RR f_*\colon D_{qc}(T)\to D_{qc} (S)$ has a right adjoint {\rm (\cite{Neeman},\cite{Lipman})}.

(2.b) If $T,S$ are semi-separated, then   the diagram 
\[ \xymatrix{  D\Qcoh(T)  \ar[r]^{\RR_{\text{qc}} f_*}  \ar[d] &  D\Qcoh(S)   \ar[d] \\
D(T) \ar[r]^{\RR f_*} &   D(S) }\]	is conmutative {\rm (\cite{Lipman})} and the functor $\RR_{qc} f_*\colon D\Qcoh (T)\to D\Qcoh(S)$ has a right adjoint {\rm (\cite{Neeman})}.	
\end{thm}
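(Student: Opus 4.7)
The plan is to deduce each assertion from its finite-space counterpart via the finite model technology. For~(1) I fix a finite model $\pi\colon S\to X$, which is semi-separated precisely when $S$ is. For~(2) I fix finite models $p\colon T\to X$, $q\colon S\to Y$ and a schematic finite-space morphism $\bar f\colon X\to Y$ lifting $f$. The bridge is provided by Theorems~\ref{comparison} and~\ref{comparison2}, together with the two commutative squares of Remark~\ref{remark}.

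For~(1.a) I would apply Remark~\ref{remark}(1): since the two vertical arrows are equivalences, the natural functor $D\Qcoh(S)\to D_{qc}(S)$ is identified with $D\Qcoh(X)\to D_{qc}(X)$, which is an equivalence by Theorem~\ref{DQc=D_qc}. For~(1.b), starting from $\M\in\Qcoh(S)$, I take the quasi-coherent module $\Nc\in\Qcoh(X)$ corresponding to $\M$ under Theorem~\ref{comparison}, choose a flat quasi-coherent resolution $\Pc^{\scriptscriptstyle\bullet}\to\Nc$ via Theorem~\ref{enoughflats}, and apply the exact equivalence $\pi^*$ to obtain a quasi-coherent resolution $\pi^*\Pc^{\scriptscriptstyle\bullet}\to\M$. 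The only computation needed is that $\pi^*$ sends flats to flats; this is local on the affine cover $\{U^x=\Spec\OO_x\}$ and reduces to the fact that flatness of $\Pc^i$ as a module on the finite space $X$ means each stalk $\Pc^i_x$ is $\OO_x$-flat, whence the tensor product $\Pc^i_x\otimes_{\OO_x}\OO_{S,s}$ is flat at every $s\in U^x$.

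For~(2.a), Remark~\ref{remark}(2) identifies $\RR f_*\colon D_{qc}(T)\to D_{qc}(S)$ with $\RR\bar f_*\colon D_{qc}(X)\to D_{qc}(Y)$, and Theorem~\ref{SchematicDuality}(3) supplies a right adjoint to the latter. For~(2.b), the semi-separatedness of $T,S$ forces that of the finite models $X,Y$, so Theorem~\ref{SchematicDuality}(4) yields a right adjoint to $\RR_{qc}\bar f_*\colon D\Qcoh(X)\to D\Qcoh(Y)$, which transports via Theorem~\ref{comparison} to a right adjoint of $\RR_{qc} f_*$. The commutativity of the displayed square follows by pasting: both $\RR_{qc} f_*\M$ and $\RR f_*\M$, for $\M\in D\Qcoh(T)$, correspond under the equivalences of Theorems~\ref{comparison} and~\ref{comparison2} to $\RR_{qc}\bar f_*$ and $\RR\bar f_*$ applied to the image of $\M$ in $D\Qcoh(X)$, and Theorem~\ref{Rqc=R} identifies these two after forgetting to $D(Y)$.

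The argument is almost entirely formal once one grants the identifications in Remark~\ref{remark}; the single non-formal step I expect to require a brief verification is preservation of flatness by $\pi^*$ in part~(1.b).
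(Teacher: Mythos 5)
Your proposal is correct and follows essentially the same route as the paper, which itself only states that Theorems \ref{comparison}, \ref{comparison2} and Remark \ref{remark} transport the finite-space results (Theorems \ref{DQc=D_qc}, \ref{Rqc=R}, \ref{enoughflats}, \ref{SchematicDuality}) to schemes. Your extra verification that $\pi^*$ preserves flatness is a sensible addition and is consistent with the observation made in the proof of Theorem \ref{Thm-qcqs} that the equivalence $\pi^*$ preserves tensor products and hence flatness.
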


\section{Ringed Spaces}

Let us first recall the notion of an affine ringed space introduced in \cite{Sancho} (see also  \cite{Sanchos}).

\begin{defn} Let $(S,\OO_S)$ be a ringed space and $A=\OO_S(S)$. We say that $(S,\OO_S)$ is an {\it affine} ringed space if:

(1) It is acyclic: $H^i(S,\OO_S)=0$ for any $i>0$.

(2) The functor \[\aligned  \Qcoh(S)&\to \{ A-\text{modules}\}\\ \M&\leadsto \Gamma(S,\M)
\endaligned \] is an equivalence.
\end{defn}

In the topological case (i.e., $\OO_S=\ZZ$), $(S,\ZZ)$ is an  affine ringed space if and only if $S$ is homotopically trivial (where $S$ is assumed to be path-connected, locally path-connected, and  locally simply connected). If $(S,\OO_S)$ is a scheme, then $(S,\OO_S)$ is an affine ringed space if and only if $S$ is an affine scheme, i.e., $S=\Spec A$. See \cite{Sanchos} for the proofs.

\begin{defn} (See \cite[Section. 2.2]{Sancho2}) Let $S$ be a topological space and let $\U=\{U_1,\dots,U_n\}$ be a finite open covering. For each $s\in S$, let us denote $$U^s=\underset{s\in U_i}\bigcap U_i.$$ Let $\sim$ be the equivalence relation on $S$ defined as: $s\sim s'\Leftrightarrow U^s=U^{s'}$, and let $X=S/\sim$ the (finite) quotient set, with the topology associated to the partial order: $[s]\leq [s']$ iff $U^s\supseteq U^{s'}$. The quotient map $\pi\colon X\to S$ is continuous and $\pi^{-1}(U_x)=U^s$ for any $s\in S$, with $x=\pi(s)$. We say that $X$ is the {\it finite topological space associated to $\U$}. If $\OO_S$ is a sheaf of rings on $S$, then $\OO:=\pi_*\OO_S$ is a sheaf of rings on $X$ and $(S,\OO_S)\to (X,\OO)$ is a morphism of ringed spaces. We say that $(X,\OO)$ is the {\it finite ringed space associated to the covering $\U$ of $(S,\OO_S)$}.
\end{defn}

\begin{defn} Let $(S,\OO_S)$ be ringed space and $\U$ a finite open covering. We say that $\U$ is {\it locally affine} if $U^s$ is an affine ringed space for any $s\in S$. This is equivalent to say that the associated map $\pi\colon S\to X$ is ``affine'': $\pi^{-1}(U_x)$ is affine for any $x\in X$. In this case we say that $(X,\OO)$ is a {\it finite model} of $(S,\OO_S)$.
\end{defn}

\begin{rem}\label{rem-qc-qs} It is proved in \cite{Sanchos} that if $\U$ is a locally affine finite covering of $(S,\OO_S)$ and $\pi\colon (S,\OO_S)\to (X,\OO)$ is the associated finite ringed space, then the direct image $\pi_*$ takes quasi-coherent modules on $S$ into quasi-coherent modules on $X$ and the functors
\[ \pi_*\colon \Qcoh(S)\to \Qcoh (X),\qquad  \pi^*\colon \Qcoh(X)\to \Qcoh (S)\] are mutually inverse.
\end{rem}

\begin{defn}\label{qcqs} We say that a ringed space $(S,\OO_S)$ is {\it quasi-compact and quasi-separated} if it admits a locally affine finite covering $\U$ such that for any $U^s\supseteq U^{s'}$ the ring homomorphism $\OO_S(U^s)\to\OO_S(U^{s'})$ is flat. This is equivalent to say that $(S,\OO_S)$ admits a finite model $(X,\OO)$ which is a finite space (Definition \ref{finitespace}).
%
%
\end{defn}

\begin{ejems} \begin{enumerate} \item Any finite simplicial complex $S$ is quasi-compact and quasi-separated (i.e., $(S,\ZZ)$ is a quasi-compact and quasi-separated ringed space). This is due to Mc Cord (\cite{McCord}).
\item Any finite $h$-regular  CW-complex is quasi-compact and quasi-separated (see \cite{BarmakMinian}).
\item If $(S,\OO_S)$ is a scheme, then $(S,\OO_S)$ is a quasi-compact and quasi-separated ringed space if and only if it is a quasi-compact and quasi-separated scheme (see \cite[Proposition 2.4]{Sancho}).
\end{enumerate}
\end{ejems}

\begin{thm}\label{Thm-qcqs} If $(S,\OO_S)$ is a quasi-compact and quasi-separated ringed space, then $\Qcoh(S)$ is a Grothendieck abelian category. Moreover, $\Qcoh(S)$ admits flat covers and cotorsion envelopes.
\end{thm}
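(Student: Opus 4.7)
The plan is to reduce the theorem to the corresponding statement for a finite model of $(S,\OO_S)$ and then to apply the results of \cite{EnochsEstrada}. By Definition \ref{qcqs} one may choose a locally affine finite covering whose associated finite ringed space $\pi\colon(S,\OO_S)\to(X,\OO)$ has $(X,\OO)$ a finite space in the sense of Definition \ref{finitespace}; by Remark \ref{rem-qc-qs} the functors $\pi_*$ and $\pi^*$ restrict to mutually inverse equivalences between $\Qcoh(S)$ and $\Qcoh(X)$.

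The first assertion will then be immediate: property (2) of the main properties of $\Qcoh$ over a finite space (stated in Section 1 and due to \cite{EnochsEstrada}) says that $\Qcoh(X)$ is a Grothendieck abelian category, a property stable under equivalence of abelian categories.

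For the existence of flat covers and cotorsion envelopes I will invoke \cite{EnochsEstrada}, which provides both in $\Qcoh(X)$ for any finite space $X$ (and more generally for representations of a finite quiver by rings), and then transport them along the equivalence $\pi^*$. The only point that must be verified is that the notion of flat quasi-coherent module matches on the two sides. In one direction this is formal: $\pi^*$ preserves tensor products of $\OO$-modules, so a flat $\M\in\Qcoh(X)$ pulls back to a flat $\pi^*\M\in\Qcoh(S)$ via the base-change formula $(\pi^*\M)_s=\M_{\pi(s)}\otimes_{\OO_{\pi(s)}}\OO_{S,s}$. The converse, needed to ensure that a flat $\N\in\Qcoh(S)$ corresponds to a flat object in $\Qcoh(X)$, will use that each $U^x=\pi^{-1}(U_x)$ is an affine ringed space and that the family of local stalks $\OO_{S,s}$ for $s\in U^x$ is jointly faithfully flat over $\OO_x=\OO_S(U^x)$, permitting one to descend flatness from $\pi^*\M$ to $\M$.

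Verifying this identification of the flat class is the main (and essentially the only) obstacle; the paragraph above will therefore be the technical heart of the proof. Once it is in place, the flat cover (resp.\ cotorsion envelope) of any $\N\in\Qcoh(S)$ will be produced by applying $\pi^*$ to the flat cover (resp.\ cotorsion envelope) of $\pi_*\N$ in $\Qcoh(X)$, and the standard characterizations of these enveloping structures are preserved under the equivalence by transport of structure.
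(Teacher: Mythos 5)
Your overall strategy coincides with the paper's: pass to a finite model $\pi\colon (S,\OO_S)\to(X,\OO)$ with $(X,\OO)$ a finite space, use Remark \ref{rem-qc-qs} to get the equivalence $\Qcoh(S)\simeq\Qcoh(X)$, quote \cite{EnochsEstrada} for the Grothendieck property, flat covers and cotorsion envelopes on $X$, and transport. The first assertion and the transport mechanism are fine as you state them.

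The one place you diverge is the step you yourself single out as the technical heart: matching the flat objects on the two sides. Your proposed fix for the direction ``$\N$ flat on $S$ $\Rightarrow$ $\pi_*\N$ flat on $X$'' is to descend flatness using that the family of stalks $\{\OO_{S,s}\}_{s\in U^x}$ is jointly faithfully flat over $\OO_x=\OO_S(U^x)$. Nothing in the paper supports this: an affine ringed space is only required to be acyclic and to have $\Gamma(S,-)$ induce an equivalence $\Qcoh(S)\simeq\Mod(A)$, and there is no control whatsoever on the stalks $\OO_{S,s}$ as $A$-modules (this is not an affine scheme, where localizations at primes do form a faithfully flat family). As stated, that step is a genuine gap. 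It is also unnecessary: the paper disposes of the whole issue in one line by observing that the equivalence preserves tensor products, and hence flatness. Since $\pi^*$ and $\pi_*$ are mutually inverse and $\pi^*$ commutes with $\otimes$, so does $\pi_*$; therefore flatness, read as exactness of $-\otimes\M$ on the category of quasi-coherent modules (which is the notion relevant to the flat cover/cotorsion machinery of \cite{EnochsEstrada}), is transported in both directions with no descent argument at all. If you insist on the stalkwise definition of flatness on $S$, you would indeed have something nontrivial to check, but then you should either prove the faithful flatness claim or, better, replace it by the categorical characterization above.
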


\begin{proof} By definition $S$ admits a locally affine finite covering $\U$ such that the associated finite ringed space $(X,\OO)$ is a finite space. Since $\Qcoh(X)$ is a Grothendieck abelian category, one concludes by Remark \ref{rem-qc-qs} that  $\Qcoh(S)$ is also a Grothendieck abelian category. Moreover, the equivalence $\pi^*\colon \Qcoh(X)\to \Qcoh(S)$ preserves tensor products and hence flatness. Since $\Qcoh(X)$ admits flat covers and cotorsion envelopes (\cite{EnochsEstrada}), $\Qcoh(S)$ also does.
\end{proof}

\begin{defn} Let $f\colon (T,\OO_T)\to (S,\OO_S)$ be a morphism of ringed spaces between quasi-compact and quasi-separated ringed spaces. The inverse image $f^*\colon \Qcoh(S)\to\Qcoh(T)$ has a right adjoint, because $\Qcoh(S)$ is a Grothendieck abelian category. This right adjoint shall be denoted by $$f_*^{qc}\colon \Qcoh(T)\to\Qcoh (S)$$ and named {\it quasi-coherent direct image}. If $f_*\colon \Mod(T)\to \Mod(S)$ is the ordinary direct image functor, then for any quasi-coherent module $\M$ on $T$ one has: $f_*^{qc}\M=\Qc(f_*\M)$, where $\Qc\colon \Mod(S)\to \Qcoh(S)$ is the quasi-coherator functor (i.e., the right adjoint of the inclusion functor $\Qcoh(S)\hookrightarrow \Mod(S)$). Since $\Qcoh(T)$ is a Grothendieck abelian category, $f_*^{qc}$ has a right derived functor, which shall be denoted by
\[ \RR_{qc}f_*^{qc}\colon D(\Qcoh(T))\to D(\Qcoh(S).\]
\end{defn}

\noindent{\ \ \bf Notation.} Let $\U$ be a finite open covering of $S$. For any $s,s'\in S$ we shall denote
\[ U^{ss'}=U^s\cap U^{s'}.\]

\begin{defn} Let $\U$ be a finite open covering of a ringed space $(S,\OO_S)$. We say that $\U$ is a {\it semi-separating} covering of $(S,\OO_S)$ if:

(1) $\U$ is locally affine and $\OO_S(U^s)\to\OO_S(U^{t})$ is flat for any $U^s\supseteq U^{t}$ (hence $(S,\OO_S)$ is quasi-compact and quasi-separated).

(2) $ H^i(U^{ss'},\OO_S)=0$ for any $i>0$ and  any $s,s'\in S$.  
  
(3) For any $s,s'\in S$ and any $U^s\supseteq U^t$, the natural morphism
\[ H^0(U^{ss'},\OO_S)\otimes_{\OO_S(U^s)}\OO_S(U^t)\to H^0(U^{ts'},\OO_S)\] is an isomorphism.

A ringed space $(S,\OO_S)$ is called {\it semi-separable} if it admits a semi-separating covering.
\end{defn}

\begin{rem}(1) A locally affine covering $\U$ is semi-separating if and only if the associated finite ringed space $(X,\OO)$ is semi-separated. Thus a semi-separable ringed space is a ringed space that admits a semi-separated finite model.

(2) A scheme $(S,\OO_S)$ is semi-separable if and only if it is a semi-separated scheme.

(3) Any semi-separated finite space is semi-separable, but the converse is not true. For example, a finite topological space is semi-separable if and only if it is homotopically trivial and it is semi-separated if and only if it is irreducible (in particular, it is contractible).
\end{rem}

\begin{thm}[Grothendieck duality for semi-separable ringed spaces]\label{dualityringedspaces} Let $f\colon (T,\OO_T)\to (S,\OO_S)$ be a morphism of ringed spaces between semi-separable ringed spaces. Assume that there exist a semi-separating covering  $\U=\{U_1,\dots, U_n\}$ of $T$ and a semi-separating covering $\V=\{ V_1,\dots,V_m\}$ of $S$ such that:

(1) $f$ is compatible with $\U$ and $\V$: for any $t\in T$ one has that  $U^t\subseteq f^{-1}(V^{f(t)})$.

(2) For any $U^t\supseteq U^{t'}$ and any $V^s\supseteq V^{s'}$ the natural morphisms

\[ \aligned H^i(U^t\cap f^{-1}(V^s),\OO_T)\otimes_{\OO_T(U^t)}\OO_T(U^{t'}) &\to H^i(U^{t'}\cap f^{-1}(V^s),\OO_T) \\ 
H^i(U^t\cap f^{-1}(V^s),\OO_T)\otimes_{\OO_S(V^s)}\OO_S(V^{s'}) &\to H^i(U^{t}\cap f^{-1}(V^{s'}),\OO_T)
\endaligned
\] are isomorphisms for any $i\geq 0$. 

Then  $\RR_{qc} f^{qc}_*\colon D\Qcoh(T)\to D\Qcoh (S)$ has a right adjoint.
\end{thm}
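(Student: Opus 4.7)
The strategy is to reduce to Theorem \ref{SchematicDuality}(4) by passing to the semi-separated finite models supplied by $\U$ and $\V$. Let $\pi_T\colon T\to X$ and $\pi_S\colon S\to Y$ be the finite ringed spaces associated to $\U$ and $\V$. The hypotheses that $\U$ and $\V$ are semi-separating coverings are exactly, via Proposition \ref{basics}(2), the conditions for $X$ and $Y$ to be semi-separated finite spaces. By Remark \ref{rem-qc-qs}, $\pi_{T*}\colon \Qcoh(T)\to\Qcoh(X)$ and $\pi_{S*}\colon \Qcoh(S)\to\Qcoh(Y)$ are exact equivalences, with quasi-inverses $\pi_T^*$ and $\pi_S^*$.

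Next, I would produce from $f$ a schematic morphism $\bar f\colon X\to Y$ making the square $\pi_S\circ f=\bar f\circ\pi_T$ commute. Hypothesis (1), $U^t\subseteq f^{-1}(V^{f(t)})$, implies (taking $t'\sim t$, so $t'\in U^t$ and $t\in U^{t'}$) that $V^{f(t)}=V^{f(t')}$, hence the set-theoretic rule $\bar f([t]):=[f(t)]$ is well-defined; the same containment gives monotonicity, hence continuity of $\bar f$, and yields $\pi_T^{-1}(\bar f^{-1}(U_{[s]}))=f^{-1}(V^s)$. Affineness of the $U^t$ (resp.\ $V^s$) forces $R^i\pi_{T*}\OO_T=0=R^i\pi_{S*}\OO_S$ for $i>0$, so the Leray spectral sequence identifies
\[ H^i(U_{[t]}\cap\bar f^{-1}(U_{[s]}),\OO_X)=H^i(U^t\cap f^{-1}(V^s),\OO_T) \]
for all $i\geq 0$. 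Hypothesis (2) of the theorem then translates word-for-word into the criterion of Proposition \ref{basics}(1), so $\bar f$ is a schematic morphism between semi-separated finite spaces.

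Now the equality $\pi_S\circ f=\bar f\circ\pi_T$ gives $f^*\pi_S^*=\pi_T^*\bar f^*$ on modules, and restricting to quasi-coherent modules and taking right adjoints yields a natural isomorphism $\pi_{S*}\circ f^{qc}_*\simeq \bar f_*\circ\pi_{T*}$ of functors $\Qcoh(T)\to\Qcoh(Y)$; here I use Proposition \ref{basics}(4), which ensures that $\bar f_*$ preserves quasi-coherence and therefore coincides with the quasi-coherent direct image of $\bar f$. Because $\pi_{T*}$ and $\pi_{S*}$ are exact equivalences, this lifts to a natural isomorphism of right derived functors
\[ \pi_{S*}\circ \RR_{qc}f^{qc}_*\ \simeq\ \RR_{qc}\bar f_*\circ \pi_{T*}\colon D\Qcoh(T)\longrightarrow D\Qcoh(Y). \]

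By Theorem \ref{SchematicDuality}(4) applied to $\bar f$, the functor $\RR_{qc}\bar f_*\colon D\Qcoh(X)\to D\Qcoh(Y)$ has a right adjoint $\bar f^!$. Transporting through the (derived) equivalences $\pi_{T*}$ and $\pi_{S*}$, the composition $\pi_T^*\circ\bar f^!\circ\pi_{S*}$ is then a right adjoint of $\RR_{qc}f^{qc}_*$, which proves the theorem. The main technical hurdle is the second step, namely the faithful translation of hypothesis (2) into the schematic criterion on the finite models; once $\bar f$ is identified as a schematic morphism between semi-separated finite spaces, the rest is a transport of adjunctions across equivalences.
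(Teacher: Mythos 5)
Your proposal is correct and follows essentially the same route as the paper: pass to the semi-separated finite models $X,Y$ associated to $\U,\V$, use hypothesis (1) to build $\bar f$ and hypothesis (2) to see that $\bar f$ is schematic, identify $\RR_{qc}f^{qc}_*$ with $\RR_{qc}\bar f_*$ through the equivalences $\pi_{T*},\pi_{S*}$, and invoke Theorem \ref{SchematicDuality}(4). The only (immaterial) difference is that the paper additionally descends to the square $\RR\bar f_*\colon D_{qc}(X)\to D_{qc}(Y)$ via Theorems \ref{DQc=D_qc} and \ref{Rqc=R}, whereas you apply part (4) directly; your extra detail on translating hypothesis (2) via acyclicity of the $U^t$ and the Leray identification is a welcome filling-in of what the paper leaves implicit.
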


\begin{proof} Let $(X,\OO_X)$ (resp. $(Y,\OO_Y)$) be the finite ringed space associated to $\U$ (resp. to $\V$). They are semi-separated finite spaces, because $\U$ and $\V$ are semi-separating. Condition (1) yields that there exists a continous map $\bar f\colon X\to Y$ such that the diagram
\[\xymatrix{ T\ar[r]^f\ar[d] & S\ar[d]\\ X\ar[r]^{\bar f} & Y
}\] is commutative. Moreover, $\bar f$ is a morphism of ringed spaces. Now, condition (2) implies that $\bar f$ is a schematic morphism (and then ${\bar f}^{qc}_*={\bar f}_*$). Let us consider the diagram
\[\xymatrix{ D\Qcoh(T)\ar[r]^{\RR_{qc}f^{qc}_*}\ar[d]^{\wr} & D\Qcoh(S)\ar[d]^{\wr} \\
D\Qcoh(X)\ar[r]^{\RR_{qc}{\bar f}_*}\ar[d]^{\wr} & D\Qcoh(Y)\ar[d]^{\wr} \\ D_{qc}(X)\ar[r]^{\RR {\bar f}_*}& D_{qc}(Y)
.}\] The vertical functors are equivalences, by Remark \ref{rem-qc-qs} and Theorem \ref{DQc=D_qc}, and the squares are commutative (the higher one is immediate and the lower one  is due to Theorem \ref{Rqc=R}). We conclude  by Theorem \ref{SchematicDuality}. 
\end{proof}

\begin{ejem} Let $f\colon (T,\OO_T)\to (S,\OO_S)$ be a morphism of ringed spaces between semi-separable ringed spaces. Assume that $f$ is ``affine'', i.e., there exists a semi-separating covering $\V=\{ V_1,\dots,V_m\}$ of $S$ such that $f^{-1}(V^s)$ is affine for any $s\in S$. Then  $\RR_{qc} f^{qc}_*\colon D\Qcoh(T)\to D\Qcoh (S)$ has a right adjoint.
\end{ejem}


\begin{thebibliography}{1}	

\bibitem{AlonsoJeremiasSouto} {L.~Alonso Tarr\'io, A.~Jerem\'ias L\'opez and M.J.~Souto Salatorio}, {Localization in categories of complexes and unbounded resolutions}, Canad. J. Math., 52 (2), pp. 225-247  (2000).





\bibitem{Barmak} {\sc J.A.~Barmak}, {\em Algebraic Topology of Finite Topological Spaces and Applications}, Lecture Notes in Mathematics, 2032. Springer, Heidelberg (2011).


\bibitem{BarmakMinian} {J.A.~Barmak and E.G.~Minian}, {One-point reductions of finite spaces, h -regular CW-complexes and collapsibility}, Algebr. Geom. Topol.  8,  no. 3, 1763-1780   (2008). 

\bibitem{BarmakMinian2} {J.A.~Barmak and E.G.~Minian}, {Simple homotopy types and finite spaces}, Adv. Math.  218,  no. 1, 87-104   (2008).



\bibitem{BokstedtNeeman} {M.~Bokstedt and A.~Neeman}, {Homotopy limits in triangulated categories}, Compositio Math.  86,  no. 2, 209-234   (1993).
 

\bibitem{EnochsEstrada} {E.~Enochs and S.~Estrada}, {Relative homological algebra in the category of quasi-coherent sheaves}, Adv. Math. 194, no. 2, 284-295 (2005).

\bibitem{Lipman} {J.~Lipman}, {Notes on derived functors and Grothendieck duality}, Foundations of Grothendieck duality for diagrams of schemes,  1-259, 
Lecture Notes in Math., 1960, Springer, Berlin, 2009. 


\bibitem{McCord} {\sc M.C.~McCord}, {\em Singular homology groups and homotopy groups of finite topological spaces}, Duke Math. J. 33 (1966), 465-474.


\bibitem{Murfet} {D.~Murfet}, {Derived Categories Of Quasi-coherent Sheaves},  
http://therisingsea.org/notes/DerivedCategoriesOf\-QuasicoherentSheaves.pdf  

\bibitem{Navarro} {J. A.~Navarro González}, {Duality and finite spaces},  Order  6,  no. 4, 401-408   (1990).

\bibitem{Neeman} {A.~Neeman}, {The Grothendieck duality theorem via Bousfield’s techniques and
Brown representability},  J. Amer. Math. Soc. 9, no. 1, 205-236  (1996).

\bibitem{Sancho}  {\sc F.~Sancho de Salas}, {\em Finite Spaces and Schemes}, J. Geom. Phys.  122, 3-27   (2017).

\bibitem{Sancho2} {\sc F.~Sancho de Salas}, {\em Homotopy of   finite ringed spaces}, J. Homotopy Relat. Struct.  13,  no. 3, 481-501 (2018).

\bibitem{Sanchos} {\sc F.~Sancho de Salas and P.~Sancho de Salas}, {\em Affine ringed spaces and Serre's criterion}, Rocky Mountain J. Math.  47,  no. 6, 2051-2081   (2017).


 


\end{thebibliography}
\end{document}